    \let\Cref\crtCref
    \let\cref\crtcref
\DeclarePairedDelimiterX{\ha}[2]{\{}{\}}{#1 \,\delimsize|\, #2}
\DeclarePairedDelimiter{\zj}{(}{)}
\DeclarePairedDelimiter{\nr}{\|}{\|}
\DeclarePairedDelimiterX{\sk}[2]{\langle}{\rangle}{#1, #2}
\DeclareMathOperator*{\argmin}{arg\,min}
\DeclareMathOperator{\prox}{prox}
\DeclareMathOperator{\id}{id}
\DeclareMathOperator{\conv}{conv}
\DeclareMathOperator{\Int}{int}
\DeclareMathOperator{\Lip}{Lip}
\DeclareMathOperator{\graph}{graph}
\newcommand{\R}{\mathbb R}
\theoremstyle{plain}
\newtheorem{Thm}{Theorem}
\newtheorem{Prop}[Thm]{Proposition}
\newtheorem{Cor}[Thm]{Corollary}
\newtheorem{Lemma}[Thm]{Lemma}
\theoremstyle{definition}
\newtheorem{Def}[Thm]{Definition}
\newtheorem{Rem}[Thm]{Remark}
\newtheorem{Question}[Thm]{Question}
\numberwithin{Thm}{section}
\crefname{Prop}{Proposition}{Propositions}
\crefname{Thm}{Theorem}{Theorems}
\crefname{equation}{}{}
\setlist[enumerate]{label=(\arabic*)}
\title{On a question of Kolmogorov}
\author{Attila Gáspár}
\address{Institute of Mathematics, Eötvös Loránd University,
Pázmány Péter s. 1/C, 1117 Budapest, Hungary}
\email{gsprati99@gmail.com}
\subjclass[2020]{26A16, 28A12} 
\begin{document}

\begin{abstract}
Kolmogorov asked the following question: can every bounded measurable set in the plane be mapped onto a polygon by a 1-Lipschitz map with arbitrarily small measure loss? The answer is negative in general, however, the case of compact sets is still open. We present an equivalent form of the question for compact sets. Furthermore, we give a positive answer to Kolmogorov's question for specific classes of sets, most importantly, for planar sets with tube-null boundary. In particular, we show that the Sierpiński carpet can be mapped into the union of finitely many line segments by a 1-Lipschitz map with arbitrarily small displacements, answering a question of Balka, Elekes and Máthé.
\end{abstract}

\maketitle

\section{Introduction}

The following question was proposed by Kolmogorov in 1932
(see \cite{KQuestion}):

\begin{Question}[Kolmogorov]
    Let $A\subseteq \R^2$ be a measurable set such that its Lebesgue measure $\lambda(A)$ is finite. Is it true that for every $\varepsilon > 0$, there exists a 1-Lipschitz map $f$ such that $f(A)$ is a polygon and $\lambda(f(A)) \ge \lambda(A) - \varepsilon$?
\end{Question}

It was shown by Balka, Elekes and Máthé \cite{BEM} that the answer is negative in general, in particular, there is a bounded, open and simply connected counterexample $A$. A crucial property of their construction is that $\lambda\zj[\big]{\overline A} > \lambda(A)$, from which they proved that the image $f(A)$ cannot even be Jordan measurable if $\varepsilon$ is small enough. However, this construction does not work if $A$ is restricted to be a compact set. There is currently no known counterexample to Kolmogorov's question with compact $A$.

Our main goal is to give a positive result for Kolmogorov's question under some additional assumptions on $A$. We will also consider a higher dimensional analogue of Kolmogorov's problem.

\begin{Def}
    A set $A\subseteq \R^d$ is a \emph{polyhedron} if its boundary $\partial A$ can be covered by finitely many hyperplanes.
\end{Def}

For bounded sets $A\subseteq \R^2$, this is equivalent to the definition of polygon used in \cite{BEM}.
\begin{Def}
    We call a bounded measurable set $A\subseteq \R^d$ \emph{measure Kolmogorov} if for every $\varepsilon > 0$, there is a 1-Lipschitz map $f : A \to \R^d$ such that $f(A)$ is a polyhedron and $\lambda(f(A)) > \lambda(A) - \varepsilon$.
\end{Def}
Using these definitions, the question can be rephrased as asking which sets $A\subseteq \R^d$ are measure Kolmogorov.

A potential application of this higher dimensional variant comes from a related problem of Laczkovich \cite{Laczkovich}:
\begin{Question}[Laczkovich]
    Let $A \subseteq \R^d$ be a measurable set of positive Lebesgue measure. Is there a Lipschitz map that maps $A$ onto the unit cube?
\end{Question}
For $d=2$, this was proved by Preiss \cite{Preiss}, and subsequently strengthened by Matoušek \cite{Matousek}. An alternative proof for the planar case was found by P. Jones using an earlier result of Uy \cite{Uy} (see \cite{ACsP}). The case $d \ge 3$ is still wide open. Using the regularity of the Lebesgue measure, it can be seen that if every compact set in $\R^d$ is measure Kolmogorov, then the answer to Laczkovich's question is positive in $\R^d$.

We will also study a variant of the problem where small measure loss is replaced by small displacements:
\begin{Def}
    We call a bounded set $A\subseteq \R^d$ \emph{distance Kolmogorov} if for every $\varepsilon > 0$, there is a 1-Lipschitz map $f : A \to \R^d$ such that $f(A)$ is a polyhedron and $|f(x) - x| \le \varepsilon$ for every $x\in A$.
\end{Def}
The following implication was proved in \cite{BEM}:
\begin{Thm}[Balka, Elekes, Máthé] \label{measure_to_dist}
    Suppose that every compact set in $\R^d$ ($d \ge 2$) is measure Kolmogorov. Then every compact set in $\R^d$ is distance Kolmogorov.
\end{Thm}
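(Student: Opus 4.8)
Fix a compact set $A\subseteq\R^d$ and $\varepsilon>0$; the goal is a $1$-Lipschitz $f\colon A\to\R^d$ with $f(A)$ a polyhedron and $|f(x)-x|\le\varepsilon$ throughout $A$. A single application of the measure-Kolmogorov hypothesis to $A$ itself cannot work: it yields a $1$-Lipschitz map with polyhedral image and little measure loss, but such a map (think of a translation, or of a map acting by a different rotation on each connected piece) may move every point of $A$ by a fixed positive distance. The plan is therefore to \emph{localize}: chop $A$ into pieces of diameter much smaller than $\varepsilon$, polyhedralize each piece, and reassemble the pieces into one $1$-Lipschitz map.

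For the localization I would fix a small $\delta>0$, to be chosen in terms of $\varepsilon$ and $d$, and take $Q_1,\dots,Q_N$ to be the closed cubes of the lattice $\delta\mathbb Z^d$ that meet $A$. Each $A_i:=A\cap Q_i$ is compact, so by the hypothesis (applied to these particular sets) it is measure Kolmogorov; I would use this only in the soft form that \emph{some} $1$-Lipschitz $g_i\colon A_i\to\R^d$ has $g_i(A_i)$ a polyhedron. Replacing $g_i$ by $x\mapsto g_i(x)-g_i(a_i)+a_i$ for a fixed basepoint $a_i\in A_i$ keeps the map $1$-Lipschitz, turns its image into a translate of a polyhedron (still a polyhedron), and forces $|g_i(x)-x|\le 2\operatorname{diam}Q_i=2\sqrt d\,\delta$ for all $x\in A_i$.

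The heart of the matter is reassembly. Setting $f=g_i$ on $A_i$ fails on two counts: the $g_i$ need not agree on overlaps $A_i\cap A_j$, and a piecewise $1$-Lipschitz map need not be $1$-Lipschitz across the seams. The geometric lever I would use is that the union of two face-adjacent grid cubes is convex, so on such a pair two $1$-Lipschitz maps agreeing on the common face glue to a $1$-Lipschitz map; running this along a chain of face-adjacent cubes following the segment $[x,y]$ then upgrades "$|f(x)-f(y)|\le|x-y|$ on each link" to the global estimate. Accordingly I would (a) construct the $g_i$ compatibly, cube by cube, extending the already-built map to the next cube by Kirszbraun and then re-polyhedralizing its image there without disturbing the part already pinned down, using that a finite union of polyhedra is a polyhedron; and (b) handle the pairs $x,y\in A$ for which the chain argument degenerates --- when $[x,y]$ leaves $A$ or passes through a grid cube disjoint from $A$ --- by the crude bound $|f(x)-f(y)|\le|x-y|+O(\delta)$ from the displacement estimate (harmless once $|x-y|$ is not too small relative to $\delta$) together with, for the genuinely dangerous close pairs, a short detour through the intervening cubes that simultaneously forces the local maps to agree at the shared lower-dimensional faces.

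If this goes through, the reassembled $f$ is $1$-Lipschitz (after, if a multiplicative Lipschitz defect of size $O(\delta)$ survives, composing with the contraction $z\mapsto z/(1+C(d)\delta)$, which preserves polyhedrality and costs only $O(\delta\operatorname{diam}A)$ in displacement), its image $f(A)=\bigcup_i g_i(A_i)$ is a finite union of polyhedra hence a polyhedron, and $|f(x)-x|\le C(d)\,\delta\le\varepsilon$ once $\delta$ is small enough. The hard part will be exactly step~(a)--(b) above: since the measure-Kolmogorov hypothesis is consumed only in its soft ``maps onto a polyhedron'' form, the theorem is really a patching statement, and the main obstacle is making the local maps consistent along cube faces while keeping every intermediate image polyhedral and controlling the Lipschitz constant across the seams --- in particular for pairs of points whose connecting segment leaves $A$ or crosses a cube not meeting $A$, where the convexity-of-adjacent-cubes trick must be combined with the displacement bound and the final rescaling.
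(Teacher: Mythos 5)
The proposal has a genuine gap, and it is structural rather than a missing detail.

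You use the measure-Kolmogorov hypothesis ``only in the soft form that some $1$-Lipschitz $g_i\colon A_i\to\R^d$ has $g_i(A_i)$ a polyhedron.'' But this soft form is trivially true for \emph{every} set: the constant map $x\mapsto a_i$ has image $\{a_i\}$, whose boundary is covered by one hyperplane, so it is a polyhedron, and after your re-centering it has displacement $\le\operatorname{diam}Q_i$. So as written the argument consumes the hypothesis vacuously. If the rest of the plan went through, it would prove unconditionally that every compact set in $\R^d$ is distance Kolmogorov --- but that is precisely what is open (the whole reason \cref{measure_to_dist} is stated conditionally). This is a strong signal that the failure is exactly where you flag difficulty, in step (a)--(b).

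And indeed step (a)--(b) is the original problem in disguise, not a routine patching lemma. Extending the already-built map to the next cube by Kirszbraun and then ``re-polyhedralizing its image there without disturbing the part already pinned down'' is a constrained Lipschitz-extension-with-polyhedral-image problem for which no general mechanism is available --- if we could do this cube by cube we would not need the hypothesis at all. The one family of local maps the soft form hands you for free, constants, cannot be glued across seams unless adjacent constants coincide, which collapses the whole decomposition to a single constant and destroys the $O(\delta)$ displacement. So the localization buys nothing unless the local maps come with extra structure, and that extra structure has to come from the \emph{measure} control in the hypothesis (small measure loss forcing the map to be, in a suitable averaged sense, close to rigid on positive-measure pieces), which your plan explicitly discards. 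Note also that the paper does not reprove this theorem; it cites Balka--Elekes--M\'ath\'e and remarks only that their argument (given there for compact null sets) extends with minor modifications, so the intended proof is not a decompose-and-patch argument of the kind you sketch but one in which the measure hypothesis is used quantitatively.
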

Although they only considered compact Lebesgue null sets for the distance Kolmogorov property, their proof works for this more general theorem with minor modifications.

Note that it is not known whether measure Kolmogorov implies distance Kolmogorov for a specific compact set $K$. For example, if $\lambda(K)=0$, then $K$ is vacuously measure Kolmogorov, on the other hand, the question of whether every such $K$ is distance Kolmogorov is open.

The reverse implication, however, holds even for specific sets. We will prove the following theorem in \cref{sec:equiv}:

\begin{Thm} \label{dist_to_measure}
    Every measurable distance Kolmogorov set is measure Kolmogorov.
\end{Thm}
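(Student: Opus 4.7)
The natural candidate is the map given by the distance Kolmogorov hypothesis itself: for a small $\delta > 0$ to be fixed later, let $f \colon A \to \R^d$ be $1$-Lipschitz with $|f(x) - x| \le \delta$ and $P \coloneqq f(A)$ a polyhedron. The bound $\lambda(P) \le \lambda(A)$ is automatic---extend $f$ by Kirszbraun to $\R^d$ and use the area formula $\lambda(f(A)) \le \int_A |Jf| \le \lambda(A)$---so the content is the lower bound $\lambda(P) > \lambda(A) - \varepsilon$.

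From $|f(x) - x| \le \delta$ I immediately get $A \subseteq P^{(\delta)}$, so $\lambda(A) \le \lambda(P^{(\delta)}) = \lambda(P) + \lambda(P^{(\delta)} \setminus P)$, and it suffices to bound the shell $\lambda(P^{(\delta)} \setminus P)$ by $\varepsilon$. My plan here is to combine inner regularity with the Lebesgue density theorem (and Egorov) to fix a compact $K \subseteq A$ with $\lambda(A \setminus K) < \varepsilon/2$ at whose points $A$ has uniform density close to $1$ on a fixed scale $r_0 > 0$. For $\delta \ll r_0$, the Kirszbraun extension of $f$ to a small ball around each $x \in K$ has displacement at most $\delta + 2\sup d(\cdot,A)$, which the density estimate forces to be much smaller than the ball's radius. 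A standard degree (winding-number) argument, comparing with the identity via the straight-line homotopy, then shows that each $x \in K$ lies within some distance $\delta' \ll r_0$ of $P$, so $K \subseteq P^{(\delta')}$.

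The main obstacle is converting this Hausdorff-type closeness into a genuine lower bound on $\lambda(P)$. Combining $K \subseteq P^{(\delta')}$ with $P \subseteq A^{(\delta)}$ gives $P^{(\delta')} \subseteq A^{(\delta + \delta')}$, whose measure tends only to $\lambda(\overline A)$, which may strictly exceed $\lambda(A)$. I expect to close this gap by using the distance Kolmogorov hypothesis a second time to extract the regularity $\lambda(\overline A) = \lambda(A)$: if $\lambda(\overline A \setminus A) > 0$, then extending a distance Kolmogorov map continuously to $\overline A$ would yield a $1$-Lipschitz surjection onto a closed polyhedron that cannot accommodate the excess measure while respecting the displacement bound. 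With that regularity, $\lambda(A^{(\delta + \delta')}) \to \lambda(A)$, and a careful choice of the density defect $\eta$ and of $\delta$ makes $\lambda(P^{(\delta)} \setminus P) < \varepsilon$, yielding the desired $\lambda(P) > \lambda(A) - \varepsilon$.
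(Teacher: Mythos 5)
Your proposal does not reach the conclusion, and the gaps are structural rather than cosmetic. Two steps fail, and a third is circular.

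First, density does not control distance to $A$. You want the Kirszbraun extension around a density point $x\in K$ to have displacement ``at most $\delta + 2\sup d(\cdot,A)$, which the density estimate forces to be much smaller than the ball's radius.'' But $\lambda(A\cap B(x,r_0))\ge (1-\eta)\lambda(B(x,r_0))$ is perfectly consistent with $A\cap B(x,r_0)$ omitting a single ball of radius $\approx \eta^{1/d}r_0$; on that ball $d(\cdot,A)$ is a fixed fraction of $r_0$, independent of $\delta$. So the extension need not be close to the identity, and the degree comparison with $\id$ on small spheres is not available.

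Second, the conclusion you extract from the degree argument --- ``each $x\in K$ lies within distance $\delta'$ of $P$'' --- is vacuous: $x\in A$, $f(x)\in P$ and $|f(x)-x|\le\delta$ already give $K\subseteq P^{(\delta)}$ with no topology. Hausdorff closeness of $K$ to $P$ cannot bound $\lambda(P)$ from below (a tiny $P$ can be Hausdorff-close to a large $K$), as you note yourself, but your subsequent chain $K\subseteq P^{(\delta')}\subseteq A^{(\delta+\delta')}$ still produces only the useless inequality $\lambda(K)\le\lambda(A^{(\delta+\delta')})$. Nothing in the argument bounds either $\lambda(P^{(\delta)}\setminus P)$ or $\lambda(P)$.

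Third, the proposed rescue via $\lambda(\overline A)=\lambda(A)$ is circular. To rule out $\lambda(\overline A)>\lambda(A)$ you invoke that a $1$-Lipschitz map on $\overline A$ with small displacement ``cannot accommodate the excess measure''; but that is precisely the statement $\lambda(f(\overline A))\ge\lambda(\overline A)-o(1)$, i.e.\ the theorem you are proving (applied to $\overline A$). Moreover the theorem does not in fact need this regularity of $A$.

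The paper's proof avoids all of this with two ingredients you are missing. (i) A $1$-Lipschitz extension that \emph{preserves the displacement bound}: by Ciosmak's result (\cref{dist_ext}, applied with $v=\id$), $f$ extends to a $1$-Lipschitz $f:\R^d\to\R^d$ with $|f(x)-x|\le\delta$ for \emph{all} $x\in\R^d$. Plain Kirszbraun does not give this. (ii) A complement trick in place of pointwise degree/density arguments: with $A\subseteq B(0,r)$, degree theory gives $f(B(0,r))\supseteq B(0,r-\delta)$, and since $f$ is $1$-Lipschitz on all of $\R^d$, $\lambda(f(B(0,r)\setminus A))\le\lambda(B(0,r)\setminus A)$; hence
\[
\lambda(f(A))\;\ge\;\lambda(f(B(0,r)))-\lambda(f(B(0,r)\setminus A))\;\ge\;\lambda(B(0,r-\delta))-\lambda(B(0,r)\setminus A)\;=\;\lambda(A)-O(\delta).
\]
This bypasses any need for density points, shell estimates on $P^{(\delta)}\setminus P$, or regularity of $\overline A$. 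If you want to continue along your own lines, the minimum repair is to import both of these ingredients; once you have the displacement-preserving global extension, the complement inequality replaces the entire density/degree/closure machinery.
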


Combining \cref{measure_to_dist,dist_to_measure} yields the following equivalence:

\begin{Cor}
    For every $d\ge 2$, the following are equivalent:
    \begin{enumerate}
        \item Every compact set $K\subseteq \R^d$ is measure Kolmogorov.
        \item Every compact set $K\subseteq \R^d$ is distance Kolmogorov.
    \end{enumerate}
\end{Cor}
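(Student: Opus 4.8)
The plan is to deduce the corollary directly from the two implications already established, \cref{measure_to_dist,dist_to_measure}, with essentially no additional work; the statement is a formal consequence of these and is phrased as a combination precisely for that reason.

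For the implication $(1) \Rightarrow (2)$ I would simply invoke \cref{measure_to_dist}: under the assumption that every compact set in $\R^d$ is measure Kolmogorov, that theorem states verbatim that every compact set in $\R^d$ is distance Kolmogorov, so there is nothing further to check.

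For the reverse implication $(2) \Rightarrow (1)$ I would argue setwise. Fix an arbitrary compact $K \subseteq \R^d$. Being compact, $K$ is in particular Lebesgue measurable, and by hypothesis $(2)$ it is distance Kolmogorov; hence \cref{dist_to_measure} applies to $K$ and gives that $K$ is measure Kolmogorov. Since $K$ was arbitrary, $(1)$ follows. The only point worth flagging is the asymmetry between the two input results — \cref{dist_to_measure} concerns a single set, whereas \cref{measure_to_dist} requires the hypothesis for the whole class of compact sets — but this causes no difficulty in the direction $(2) \Rightarrow (1)$, since there we are free to apply \cref{dist_to_measure} to each compact set individually.

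I do not anticipate any genuine obstacle here: the argument is purely combinatorial at the level of the quantifiers, and the sole role of compactness is to supply Lebesgue measurability so that \cref{dist_to_measure} is applicable.
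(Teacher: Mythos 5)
Your argument is exactly the paper's: the corollary is stated immediately after the remark ``Combining \cref{measure_to_dist,dist_to_measure} yields the following equivalence,'' with $(1)\Rightarrow(2)$ from \cref{measure_to_dist} and $(2)\Rightarrow(1)$ from \cref{dist_to_measure} applied setwise, using compactness to supply the measurability hypothesis. Your observation about the quantifier asymmetry between the two input results is correct and does not affect the deduction.
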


In \cref{sec:gstrip}, we will introduce the concept of \emph{generalized strips} and an associated outer measure $\gamma$. Our key result is the following:

\begin{Thm} \label{gstrip_main}
    Let $A\subseteq \R^d$ be a bounded set, and suppose that $\gamma(\partial A)=0$, where $\partial A$ is the boundary of $A$. Then $A$ is both distance and measure Kolmogorov.
\end{Thm}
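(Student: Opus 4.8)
The plan is to reduce the statement to the distance Kolmogorov property and then to construct the required map by composing, strip by strip, a finite family of ``flattening'' maps associated with a finite cover of $\partial A$. First note that $\gamma(\partial A)=0$ forces $\lambda(\partial A)=0$: one can cover $\partial A$ by generalized strips of arbitrarily small total width, and inside any fixed ball containing $A$ each such strip has Lebesgue measure at most a constant times its width, so their union has arbitrarily small Lebesgue measure. Hence $A$ differs from the open set $\Int A$ by a subset of a null set, so $A$ is Lebesgue measurable, and by \cref{dist_to_measure} it suffices to prove that $A$ is distance Kolmogorov. Fix $\varepsilon>0$; since $A$ is bounded, $\partial A$ is a closed subset of the compact set $\overline A$, hence compact, and using $\gamma(\partial A)=0$ (after enlarging each strip of an almost optimal cover to an open generalized strip with only a small increase in width) we obtain a finite cover $\partial A\subseteq S_1\cup\dots\cup S_n$ with $\sum_{i=1}^n w(S_i)<\varepsilon$, where $w(S_i)$ denotes the width of $S_i$.

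Next, recall from \cref{sec:gstrip} the essential feature of a generalized strip $S$ of width $w$: it comes with a $1$-Lipschitz, piecewise affine map $\Phi_S\colon\R^d\to\R^d$ that displaces every point by at most $w$, sends $\overline S$ into a finite union of hyperplanes $Q_S$, and is a local homeomorphism at every point of $\R^d\setminus\overline S$. Set
\[
f:=\Phi_{S_n}\circ\dots\circ\Phi_{S_1},\qquad g_i:=\Phi_{S_n}\circ\dots\circ\Phi_{S_{i+1}}\quad(g_n:=\id),
\]
so that $f=g_i\circ\Phi_{S_i}\circ\dots\circ\Phi_{S_1}$ for each $i$. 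Then $f$ is $1$-Lipschitz as a composition of $1$-Lipschitz maps, and a telescoping estimate gives $|f(x)-x|\le\sum_{i=1}^n w(S_i)<\varepsilon$ for all $x$, so $f|_A$ has displacement below $\varepsilon$ and $f(A)$ is bounded. Everything now reduces to showing that $f(A)$ is a polyhedron.

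The heart of the proof is the inclusion $\partial(f(A))\subseteq P:=\bigcup_{i=1}^n g_i(Q_{S_i})$, which suffices since each $g_i$ is piecewise affine and each $Q_{S_i}$ is a finite union of hyperplanes, so $P$ is covered by finitely many hyperplanes. Because $\overline A$ is compact and $f$ continuous, $\partial(f(A))\subseteq\overline{f(A)}\subseteq f(\overline A)=f(A)\cup f(\partial A)$. A point of $f(\partial A)$ lies in $P$, since $\partial A\subseteq\bigcup_i\overline{S_i}$ and $\Phi_{S_i}(\overline{S_i})\subseteq Q_{S_i}$, so $f(\overline{S_i})=g_i(\Phi_{S_i}(\overline{S_i}))\subseteq g_i(Q_{S_i})$. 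Otherwise $y\notin f(\partial A)$, and we may pick $x\in A\setminus\partial A=\Int A$ with $f(x)=y$; consider the forward orbit $x=x_0,\ x_1=\Phi_{S_1}(x_0),\ \dots,\ x_n=\Phi_{S_n}(x_{n-1})=f(x)$. If $x_{i-1}\notin\overline{S_i}$ for every $i$, then $\Phi_{S_i}$ is a local homeomorphism at $x_{i-1}$ for each $i$, hence $f$ is a local homeomorphism at $x$; shrinking a homeomorphism neighborhood of $x$ so that it lies in $\Int A$, its image is an open subset of $f(A)$ containing $y$, so $y\notin\partial(f(A))$. Consequently, when $y\in\partial(f(A))$ we must have $x_{i-1}\in\overline{S_i}$ for some $i$, and for the least such $i$ one has $x_i\in\Phi_{S_i}(\overline{S_i})\subseteq Q_{S_i}$, so $y=g_i(x_i)\in g_i(Q_{S_i})\subseteq P$. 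Thus $\partial(f(A))\subseteq P$, $f(A)$ is a polyhedron, $A$ is distance Kolmogorov, and the measure Kolmogorov property follows from \cref{dist_to_measure}.

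The main obstacle is exactly this last verification, and it hinges on two properties of the flattening maps that the formalism of \cref{sec:gstrip} must supply: piecewise affinity, so that the collapsed hyperplanes $Q_{S_i}$ remain polyhedral after the later maps $g_i$ are applied (a Lipschitz but curved collapse would destroy this), and the local homeomorphism property off the closed strip, which drives the openness argument for interior points whose orbit misses every strip. Producing such maps for generalized strips, and checking that they compose with the claimed displacement and local behavior, is precisely what the generalized-strip construction and the outer measure $\gamma$ are designed to do. Finally, one may observe that away from the closed strips and their iterated preimages $f$ is locally volume preserving and injective, and that this exceptional set has Lebesgue measure $O(\varepsilon)$; this gives $\lambda(f(A))\ge\lambda(A)-O(\varepsilon)$ directly, that is, a second self-contained route to the measure Kolmogorov property.
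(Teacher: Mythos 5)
Your overall architecture matches the paper up to a point: reduce to the distance Kolmogorov property via \cref{dist_to_measure}, use compactness of $\partial A$ to pass from $\gamma(\partial A)=0$ to a finite cover by generalized strips of small total width, and then build a single $1$-Lipschitz, low-displacement map that collapses the boundary into finitely many hyperplanes. But the final step — constructing that map as the composition $f = \Phi_{S_n}\circ\cdots\circ\Phi_{S_1}$ — departs from the paper and, as written, has a genuine gap. The key claim $f(\partial A)\subseteq P$ rests on the equality $f(\overline{S_i})=g_i(\Phi_{S_i}(\overline{S_i}))$, and this equality is false: since $f = g_i\circ\Phi_{S_i}\circ\Phi_{S_{i-1}}\circ\cdots\circ\Phi_{S_1}$, the maps $\Phi_{S_1},\dots,\Phi_{S_{i-1}}$ act \emph{before} $\Phi_{S_i}$, and they can translate a point of $\overline{S_i}$ to a point that no longer lies in $\overline{S_i}$. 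When that happens, $\Phi_{S_i}$ acts as a translation on it and does not collapse it into $Q_{S_i}$, so the orbit of a boundary point may avoid every $Q_{S_j}$ entirely and land at a value $f(x)\notin P$. Your orbit argument only rules this out for $x\in\Int A$ (where a local homeomorphism lets you conclude $f(x)\notin\partial(f(A))$); it gives no information for $x\in\partial A$.

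This is exactly the obstruction the paper flags at the start of \cref{sec:gstrip}: iterating single-strip flattening maps fails because earlier maps move the remaining strips. The paper's remedy is not composition but \emph{union}: \cref{gstrip_union} combines the polyhedral functions $f_1,\dots,f_n$ into a single polyhedral $h$ with $\bigcup_i S(f_i)\subseteq S(h)$ and $\Lip h\le\sum_i\Lip f_i$, and \cref{gstrip_compact_cover} then upgrades the countable cover into a \emph{single} generalized strip $S(h)\supseteq\partial A$ of small width. Only one proximal map $\prox_h$ is applied, so the ``strips moving out from under each other'' problem never arises, and \cref{gstrip_contr} directly shows $\prox_h(A)$ is a polyhedron. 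Your proposal, by trying to work without an analogue of \cref{gstrip_union}, omits the one lemma that the generalized-strip formalism was introduced to supply; to repair it you would either need to re-derive something equivalent to \cref{gstrip_union}, or prove a nontrivial claim about why a boundary point's orbit must eventually enter the strip being flattened (which does not hold in general). A minor additional point: generalized strips are closed, so the parenthetical ``enlarging to an open generalized strip'' cannot be taken literally; the paper instead covers the boundaries $\partial S_i$ by auxiliary thin strips to obtain an open cover before invoking compactness.
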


We will also give several applications of \cref{gstrip_main}.

We call a set $A\subseteq \R^d$ \emph{strip-null} if it can be covered by countably many strips of arbitrarily small total width. Here, a strip of width $w$ is the (closed) set of points between two parallel hyperplanes of distance $w$. When $d = 2$, strip-nullity coincides with \emph{tube-nullity}. The concept of tube-null sets was first used in Fourier analysis by Carbery, Soria and Vargas \cite{CSV}: they showed that every tube-null set is a set of divergence for the localization problem. Some examples of tube-null sets in the plane are sets with $\sigma$-finite 1-dimensional Hausdorff measure \cite[Proposition 8]{CSV}, the Koch curve \cite{KochTN} and the Sierpiński carpet \cite{SierpinskiTN}.

In higher dimensions, a set in $\R^d$ is tube-null if it can be covered by tubular neighborhoods of lines with arbitrarily small total cross-sectional area. This is strictly weaker than the strip-null property. For example, the sphere $S^2\subseteq \R^3$ is tube-null (by \cite[Proposition 8]{CSV}) but not strip-null, since its intersection with a strip of width $w$ has area at most $2\pi w$. Nevertheless, it was shown in \cite[Theorem 1.1]{SierpinskiTN} that if a closed set $K\subsetneq [0, 1]^d$ is $\times N$ invariant (that is, invariant under the map $(x_1, \dots, x_d) \mapsto (N x_1 \operatorname{mod} 1, \dots, N x_d \operatorname{mod} 1)$), then $K$ is strip-null.

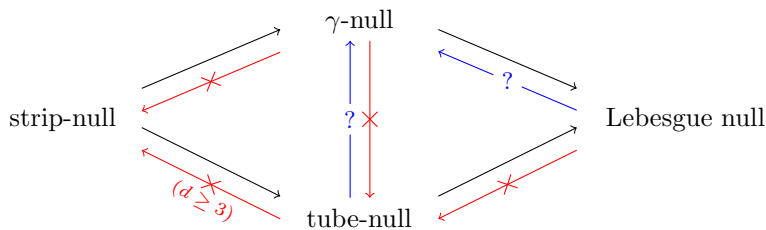
\begin{figure}
    \begin{tikzpicture}[scale=1.3,
            -x>/.style = {
                ->,
                red,
                postaction={decorate},
                decoration={
                    markings, mark = at position 0.5 with
                        {\draw[-] (0.1, -0.1) -- (-0.1, 0.1) (0.1, 0.1) -- (-0.1, -0.1);}}},
            -?>/.style = {->, blue},
            q/.style = {font=\small, circle, inner sep=0.05cm, fill=white},
        ]

        \node at (-3, 0) {strip-null};
        \node at (0, 1) {$\gamma$-null};
        \node at (0, -1) {tube-null};
        \node at (3.3, 0) {Lebesgue null};

        \draw[->, transform canvas={yshift=0.15cm}] (-2.2, 0.2) -- (-0.8, 0.8);
        \draw[-x>, transform canvas={yshift=-0.15cm}] (-0.8, 0.8) -- (-2.2, 0.2);
        \draw[->, transform canvas={yshift=0.15cm}] (-2.2, -0.2) -- (-0.8, -0.9);
        \draw[-x>, transform canvas={yshift=-0.15cm}, sloped] (-0.8, -0.9) --
            node[font=\scriptsize, below] {($d \ge 3$)} (-2.2, -0.2);
        \draw[-?>, transform canvas={xshift=-0.12cm}] (0, -0.8) -- node[q] {?} (0, 0.8);
        \draw[-x>, transform canvas={xshift=0.14cm}] (0, 0.8) -- (0, -0.8);
        \draw[-?>, transform canvas={yshift=-0.14cm}] (2.2, 0.2) -- node[q] {?}(0.8, 0.8);
        \draw[->, transform canvas={yshift=0.15cm}] (0.8, 0.8) -- (2.2, 0.2);
        \draw[-x>, transform canvas={yshift=-0.15cm}] (2.2, -0.2) -- (0.8, -0.9);
        \draw[->, transform canvas={yshift=0.15cm}, sloped] (0.8, -0.9) -- (2.2, -0.2);
    \end{tikzpicture}
    \caption{The graph of implications between the nullity properties for $d\ge 2$. A crossed arrow indicates that the implication does not hold in general (however, strip-null and tube-null coincide when $d=2$). We do not know whether the implications marked with question marks hold (see \cref{q_null}).}
    \label{fig:null_graph}
\end{figure}

The graph of implications between the various nullity concepts is shown in \cref{fig:null_graph}. In particular, we will see in \cref{strip_null_is_gstrip_null} that if a set $A\subseteq \R^d$ is strip-null, then $\gamma(A) = 0$. Combined with \cref{gstrip_main}, this yields the following result:
\begin{Cor}
    Let $A\subseteq \R^d$ be a bounded set such that its boundary $\partial A$ is strip-null. Then $A$ is both distance and measure Kolmogorov.
\end{Cor}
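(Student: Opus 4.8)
The plan is to deduce this corollary directly from \cref{gstrip_main} together with the implication, recorded earlier as \cref{strip_null_is_gstrip_null}, that a strip-null set has $\gamma$-measure zero. Concretely: if $\partial A$ is strip-null, then $\gamma(\partial A) = 0$ by \cref{strip_null_is_gstrip_null}, and since $A$ is bounded, \cref{gstrip_main} immediately gives that $A$ is both distance and measure Kolmogorov. So the only thing that needs genuine work is the inclusion ``strip-null $\Rightarrow$ $\gamma$-null'', and the corollary itself is just the act of plugging this into \cref{gstrip_main}.

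To prove that implication I would argue as follows. Fix $\varepsilon > 0$ and choose strips $S_1, S_2, \dots$ of widths $w_1, w_2, \dots$ covering the given set (here $\partial A$, which is compact since $A$ is bounded) with $\sum_i w_i < \varepsilon$. A strip of width $w$ is, up to the mild issue of unboundedness, a generalized strip of the same (or comparable) width, so its $\gamma$-measure should be bounded by $C w$ for a constant $C$ depending only on $d$ and on a fixed ball containing the set in question. Using the countable subadditivity of the outer measure $\gamma$, one then obtains $\gamma(\partial A) \le \sum_i \gamma(S_i) \le C \sum_i w_i < C\varepsilon$, and letting $\varepsilon \to 0$ shows $\gamma(\partial A) = 0$.

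The main (and essentially only) obstacle is bookkeeping internal to the definition of $\gamma$: one must check that an ordinary strip — or rather its intersection with a large ball — is admissible as a generalized strip and that its $\gamma$-measure is controlled linearly by its width, with a constant uniform over the countably many strips in the cover. Once the machinery of \cref{sec:gstrip} guarantees this, the corollary follows with no further effort; there is no delicate estimate beyond the subadditivity of $\gamma$ and the elementary observation that a thin strip has small $\gamma$-measure.
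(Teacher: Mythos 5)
Your argument is correct and matches the paper's: cite \cref{strip_null_is_gstrip_null} to conclude $\gamma(\partial A)=0$, then apply \cref{gstrip_main}. One minor correction to your sketch of \cref{strip_null_is_gstrip_null}: the worry about unboundedness and the need for a constant $C$ depending on a bounding ball are both unnecessary. Generalized strips in the paper carry no boundedness requirement, and \cref{strip_is_gstrip} shows directly that an ordinary (unbounded) strip of width $w$ \emph{is} a generalized strip of width exactly $w$. Consequently $\gamma(S_i)\le w_i$, so $C=1$ works and nothing needs to be truncated to a ball.
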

For instance, this shows that the Koch snowflake is measure Kolmogorov.

An important special case is when $A$ is a compact Lebesgue null set. In this case, the image is also Lebesgue null. This has the following consequence:
\begin{Cor}
    Suppose that $A\subseteq \R^d$ is a compact strip-null set. Then for every $\varepsilon > 0$, there is a 1-Lipschitz map $f : \R^d \to \R^d$ such that $|f(x) - x| \le \varepsilon$ for every $x\in \R^d$ and $f(A)$ can be covered by finitely many hyperplanes.
\end{Cor}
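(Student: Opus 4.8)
The plan is to deduce the statement from the distance Kolmogorov property in the corollary for sets with strip-null boundary, and then to promote the resulting map on $A$ to a global map.

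Since $A$ is bounded and strip-null it is Lebesgue null: intersecting a covering of $A$ by strips of total width less than $\eta$ with a fixed ball $B\supseteq A$ bounds $\lambda(A)$ by $\eta$ times the largest $(d-1)$-dimensional cross-section of $B$, and $\eta>0$ is arbitrary. As $A$ is also closed, $\partial A\subseteq A$ is strip-null, so that corollary applies and $A$ is distance Kolmogorov. Fix $\varepsilon>0$ and a parameter $\delta\in(0,\varepsilon)$ to be chosen later; distance Kolmogorov gives a $1$-Lipschitz $f_0\colon A\to\R^d$ with $f_0(A)$ a polyhedron and $|f_0(x)-x|\le\delta$ on $A$. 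As $f_0$ is $1$-Lipschitz and $A$ is Lebesgue null, $f_0(A)$ is a compact, Lebesgue null polyhedron, and a polyhedron with empty interior equals its boundary; hence $f_0(A)\subseteq H_1\cup\dots\cup H_n$ for finitely many hyperplanes. It therefore suffices to extend $f_0$ to a $1$-Lipschitz $f\colon\R^d\to\R^d$ with $|f(x)-x|\le\varepsilon$ for all $x$, since then $f(A)=f_0(A)\subseteq H_1\cup\dots\cup H_n$.

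This extension is the heart of the matter, and I would isolate it as a lemma: \emph{if $K\subseteq\R^d$ is compact and $g\colon K\to\R^d$ is $1$-Lipschitz with $\sup_{K}|g(x)-x|\le\delta$, then $g$ extends to a $1$-Lipschitz $\bar g\colon\R^d\to\R^d$ with $\sup_{\R^d}|\bar g(x)-x|\le C_d\delta$} (one expects $C_d=1$; for $d=1$ one just interpolates linearly on each bounded complementary interval and translates on the two unbounded ones). Any $1$-Lipschitz extension $\hat g$ of $g$ satisfies $|\hat g(x)-x|\le 2\operatorname{dist}(x,K)+\delta$, so Kirszbraun's theorem already controls a small neighbourhood of $K$ once $\delta$ is small; the real difficulty is that far from $K$ the extension must essentially be a translate of the identity, so a bounded-range extension (e.g.\ Kirszbraun composed with projection onto $\overline{\conv}(g(K))$) is of no use there. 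I would try to close the gap either by a direct selection — letting $\bar g(x)$ be the point of $\bigcap_{a\in K}\overline B\bigl(g(a),|x-a|\bigr)$ nearest to $x$, which restricts to $g$ on $K$ and is close to $x$ at points surrounded by $K$ — and then checking that it is $1$-Lipschitz, or, failing that, by noting that the map produced in the proof of \cref{gstrip_main} is itself a uniformly convergent composition of $1$-Lipschitz self-maps of $\R^d$ whose total displacement is at most the total width of the covering strips, and hence already restricts from a map of the required form. The main obstacle I anticipate is precisely the uniform displacement bound far from $K$; granting the lemma, one finishes by choosing $\delta\le\varepsilon/C_d$ and taking $f$ to be the extension of $f_0$.
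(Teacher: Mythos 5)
Your outline is right, and you correctly identify both that $A$ being strip-null forces $f_0(A)$ to be a Lebesgue-null polyhedron (hence contained in its boundary, which lies in finitely many hyperplanes) and that the only remaining issue is producing a global $1$-Lipschitz self-map of $\R^d$ with small displacement. But the ``extension lemma'' you isolate and leave unproved is already in the paper: it is \cref{dist_ext} (Ciosmak) with $X = Y = \R^d$ and $v = \id_{\R^d}$, which gives exactly the extension you want with $C_d = 1$. You reproduce the hypotheses of that theorem almost verbatim without noticing it is available, so as written your argument has a hole precisely at the step you call the heart of the matter.

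There is also a cleaner route, which is what the paper intends (see the remark following the proof of \cref{gstrip_main}): you do not need to pass through the distance Kolmogorov property of $A$ and then extend. The map constructed in the proof of \cref{gstrip_main} is $F = \prox_f$ for a single $\varepsilon$-Lipschitz polyhedral function $f$ whose generalized strip $S(f)$ covers $\partial A$; this $F$ is \emph{already} a globally defined $1$-Lipschitz map $\R^d \to \R^d$ with $|F(x)-x|\le\varepsilon$ everywhere, by \cref{prox_contr} and \cref{prox_dist}. Your fallback observation (b) gestures at this, but misdescribes the mechanism: $F$ is not a uniformly convergent composition of strip maps. The whole point of \cref{gstrip_union} is to merge the finitely many strips into one generalized strip and take a \emph{single} proximal map, which sidesteps the exponential-displacement problem you would get from composing. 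Since $A$ is compact and strip-null (hence $\gamma$-null and Lebesgue null), $\partial A \subseteq A$ is $\gamma$-null, so \cref{gstrip_compact_cover} produces the required $f$ directly, $F(A)$ is a polyhedron by \cref{gstrip_contr}, and $F(A)$ has empty interior because it is Lebesgue null; that finishes the proof with no extension needed.

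Your proposed direct selection (nearest point of $\bigcap_{a\in K}\overline B(g(a),|x-a|)$ to $x$) is plausible but you leave its $1$-Lipschitz continuity unverified, and the displacement bound far from $K$ does require an argument; since the paper already supplies \cref{dist_ext}, there is no reason to reprove it.
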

Since the Sierpiński carpet is a compact tube-null set in the plane by the previously mentioned result in \cite{SierpinskiTN}, it follows that it is distance Kolmogorov. This answers the question in \cite[Remark 4.3]{BEM}.

Finally, in \cref{sec:surface}, we will see some examples of $\gamma$-null sets. We will show that convex hypersurfaces and regular $\mathcal C^2$ hypersurfaces in $\R^d$ are $\gamma$-null. Combined with the $\sigma$-subadditivity of $\gamma$, this yields the following corollary:
\begin{Cor}
    Suppose that the boundary of a bounded set $A\subseteq \R^d$ can be covered by countably many convex or regular $\mathcal C^2$ hypersurfaces. Then $A$ is both distance and measure Kolmogorov.
\end{Cor}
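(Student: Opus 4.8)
The plan is to deduce the corollary by assembling three ingredients that are all in hand once the earlier sections are done: \cref{gstrip_main}, which reduces ``$A$ is distance and measure Kolmogorov'' to the single condition $\gamma(\partial A)=0$; the fact that $\gamma$ is an outer measure, hence countably subadditive; and the two results of \cref{sec:surface} that a convex hypersurface and a regular $\mathcal C^2$ hypersurface in $\R^d$ are $\gamma$-null. Given these, the argument is one line: if $\partial A=\bigcup_{n}S_n$ with each $S_n$ convex or regular $\mathcal C^2$, then $\gamma(\partial A)\le\sum_n\gamma(S_n)=0$ by $\sigma$-subadditivity, and \cref{gstrip_main} applies. (Unboundedness of the $S_n$ is irrelevant: the two lemmas of \cref{sec:surface} give $\gamma(S_n)=0$ with no boundedness hypothesis, or one may intersect each $S_n$ with a fixed ball containing the bounded set $\partial A$.)

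So the real content is the two $\gamma$-nullity lemmas, and I would prove them along the following lines. Because $\gamma$ is $\sigma$-subadditive and every subset of $\R^d$ is Lindelöf, it is enough to show that each point of the hypersurface has a neighborhood meeting it in a $\gamma$-null set. Locally --- at every point in the $\mathcal C^2$ case, and at every point away from an $\mathcal H^{d-1}$-null, at most $(d-2)$-dimensional singular set in the convex case --- the hypersurface is the graph $\{(y,\phi(y)):y\in Q\}$ of a function over a small cube $Q\subseteq\R^{d-1}$ in a supporting hyperplane, and after an isometry we may assume the origin lies on it with $\phi(0)=0$ and $\nabla\phi(0)=0$; here $\phi\in\mathcal C^2$ in the first case and $\phi$ is convex, hence locally Lipschitz and twice differentiable a.e.\ by Alexandrov's theorem, in the second. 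The $(d-2)$-dimensional singular set that is discarded in the convex case is itself strip-null --- a small piece of it sits in a single thin strip around a hyperplane containing its tangent $(d-2)$-plane --- hence $\gamma$-null.

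The heart of the matter is then to cover such a graph patch by generalized strips of arbitrarily small total width. The subtle point is that ordinary strips do not suffice when $d\ge3$: the unit sphere $S^{d-1}$, which is both convex and regular $\mathcal C^2$, is not strip-null, since every strip of width $w$ meets it in a set of $\mathcal H^{d-1}$-measure $O(w)$. This is exactly where the generalized-strip construction of \cref{sec:gstrip} is used: a generalized strip can follow a gently bending graph, so that the relevant notion of width is governed by the oscillation of $\nabla\phi$ rather than by that of $\phi$. Concretely, I would subdivide $Q$ into dyadic subcubes of small side $\rho$, note that over each subcube $\nabla\phi$ oscillates by at most $\|D^2\phi\|_\infty\rho\sqrt{d-1}$ (and, in the convex case, that these oscillations sum over the subcubes to the total variation of the monotone map $\nabla\phi$), fit the graph pieces into generalized strips accordingly, and verify that their total width tends to $0$ as $\rho\to0$; summing over the finitely many charts covering a compact portion of the hypersurface, and then over countably many such portions, yields $\gamma$-measure zero.

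The main obstacle is precisely this last packing estimate in dimension $d\ge3$: one must match the exact definition of a generalized strip and of its width to the Taylor expansion of $\phi$ (respectively to the monotonicity of $\nabla\phi$) so that the bookkeeping closes --- that finitely many generalized strips of total width below any prescribed $\varepsilon$ genuinely cover each curved patch. The localization, the interplay between the $\mathcal C^2$ and convex cases via Alexandrov's theorem, and the final assembly of the corollary are by comparison routine.
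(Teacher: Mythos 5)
Your deduction of the corollary itself is correct and is exactly what the paper does: with $\partial A\subseteq\bigcup_n S_n$ and each $S_n$ a convex or regular $\mathcal C^2$ hypersurface, $\sigma$-subadditivity of $\gamma$ together with \cref{convex_gstrip_null,surface_gstrip_null} gives $\gamma(\partial A)=0$, and \cref{gstrip_main} finishes. Nothing further is required for the corollary.

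The additional sketch you offer of the underlying $\gamma$-nullity theorems, however, departs from the paper and, as written, has a genuine gap which you yourself flag. A patchwork over dyadic subcubes of side $\rho$ of a piece $Q\subseteq\R^{d-1}$ of a $\mathcal C^2$ graph involves on the order of $\rho^{-(d-1)}$ subcubes, and over each the graph departs from its tangent hyperplane by $O(\rho^2)$; so a cover by one (generalized) strip per subcube has total width on the order of $\rho^{-(d-1)}\cdot\rho^2=\rho^{3-d}$, which does not tend to $0$ for $d\ge3$. This is not a bookkeeping detail that can be matched to the definitions; a union of many small local strips is the wrong primitive. The paper's proof is structured differently: for the $\mathcal C^2$ case (\cref{surface_gstrip_null}), it writes $f=g-h$ as a difference of convex functions via \cref{convex_diff}, approximates $g,h$ by small-Lipschitz polyhedral functions (\cref{convex_approx}), and then \cref{dc_gstrip_cover} produces a \emph{single} generalized strip $S(F)$ with $F(x,y)=2\varepsilon\max(2g(x)-y,\,y+2h(x))$, of width $\le 8\varepsilon$, that contains the entire $\varepsilon$-thickened graph at once. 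For the convex case (\cref{convex_gstrip_null}), \cref{convex_neighborhood_cover} builds one generalized strip from a polytope approximation of the convex body, with no recourse to Alexandrov's twice-differentiability theorem and no separate treatment of a singular set. The essential idea you are missing is that a single generalized strip can ``bend'' along a whole curved patch, so the relevant width is controlled by an approximation error rather than by the number of local charts.
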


\section{Proof of the equivalence} \label{sec:equiv}

The goal of this section is to prove \cref{dist_to_measure}. We begin by showing the following lemma, which states that a 1-Lipschitz map with small displacement results in small measure changes. The main ideas of the proof were also used in \cite{BEM}.

\begin{Lemma} \label{dist_to_measure_ball}
    Fix some $r > 0$. Suppose that for some $\varepsilon > 0$, $f : \R^d \to \R^d$ is a 1-Lipschitz map with the property $|f(x) - x|\le \varepsilon$ for every $x\in \R^d$. Then for every measurable set $A\subseteq B(0, r)$, $\lambda(A) - \lambda(f(A)) = O(\varepsilon)$ as $\varepsilon \to 0$. The $O(\varepsilon)$ here depends on $d$ and $r$, but not on $f$ or $A$.
\end{Lemma}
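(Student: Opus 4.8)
The plan is to compare $A$ with its complement inside a large fixed ball, exploiting the elementary fact that a $1$-Lipschitz map cannot increase Lebesgue outer measure together with a topological argument showing that $f$ is almost surjective onto a large ball. Since only the behaviour as $\varepsilon\to 0$ matters, I may assume $\varepsilon<1$ and fix once and for all $R:=r+1$, so that $A\subseteq B(0,r)\subseteq\overline{B(0,R)}$; write $c_d:=\lambda(B(0,1))$, so $\lambda(\overline{B(0,\rho)})=c_d\rho^d$. Recall first that any $1$-Lipschitz $g$ satisfies $\lambda^*(g(E))\le\lambda^*(E)$ for every $E\subseteq\R^d$: cover $E$ by balls of radii $\rho_i$ with $\sum c_d\rho_i^d$ close to $\lambda^*(E)$ and note that $g$ sends each into a ball of the same radius (this is also the case $s=d$ of $\mathcal H^s(g(E))\le(\Lip g)^s\,\mathcal H^s(E)$). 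Since a bounded measurable set is, up to a Lebesgue null set, a countable union of compact sets, $f$ maps bounded measurable sets to measurable sets, so all quantities below are well defined; in particular $\lambda(f(A))\le\lambda(A)$, and the content of the lemma is the matching lower bound on $\lambda(f(A))$.

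The heart of the argument is the claim that $f\bigl(\overline{B(0,R)}\bigr)\supseteq B(0,R-\varepsilon)$. To prove it, fix $y$ with $|y|<R-\varepsilon$ and consider the continuous self-map $g(x):=x-f(x)+y$ of $\R^d$; for $x\in\overline{B(0,R)}$ one has $|g(x)|\le|x-f(x)|+|y|<\varepsilon+(R-\varepsilon)=R$, so $g$ maps the compact convex set $\overline{B(0,R)}$ into itself, and Brouwer's fixed point theorem produces an $x_0$ with $g(x_0)=x_0$, that is, $f(x_0)=y$. (Equivalently one could use a degree-theoretic argument via the homotopy $(1-t)f+t\,\id$, which does not hit $y$ on $\partial B(0,R)$.) Consequently $\lambda\bigl(f(\overline{B(0,R)})\bigr)\ge\lambda(B(0,R-\varepsilon))=c_d(R-\varepsilon)^d$.

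To conclude, I would set $A':=\overline{B(0,R)}\setminus A$, so that $f(A)\cup f(A')=f\bigl(\overline{B(0,R)}\bigr)$, whence subadditivity and the previous step give $\lambda(f(A))+\lambda(f(A'))\ge c_d(R-\varepsilon)^d$; on the other hand the non-expansion property gives $\lambda(f(A'))\le\lambda(A')=c_dR^d-\lambda(A)$, and subtracting yields
\[
    \lambda(A)-\lambda(f(A)) \le c_d\bigl(R^d-(R-\varepsilon)^d\bigr) \le c_d d R^{d-1}\varepsilon = c_d d (r+1)^{d-1}\varepsilon,
\]
using $R^d-(R-\varepsilon)^d\le dR^{d-1}\varepsilon$ for $0\le\varepsilon\le R$; this is exactly a bound of the form $O(\varepsilon)$ with constant depending only on $d$ and $r$, as claimed. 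The only genuinely non-routine ingredient is the geometric claim that $f$ is almost onto the big ball $\overline{B(0,R)}$ — this is where the topology (Brouwer, or a degree argument) enters — while everything else is measure-theoretic bookkeeping and the trivial estimate on a difference of $d$-th powers.
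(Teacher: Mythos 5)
Your proposal is correct and follows essentially the same approach as the paper: compare $A$ with its complement in a fixed ball, use the non-expansion of Lebesgue measure under $1$-Lipschitz maps, and show that $f$ nearly covers a large ball. The only differences are cosmetic—you work with $\overline{B(0,r+1)}$ rather than $B(0,r)$, you spell out the Brouwer fixed-point argument where the paper simply says "a standard topological argument", and you make the $O(\varepsilon)$ constant explicit.
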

\begin{proof}
    Let $0 < \varepsilon < r$. Since $f$ is continuous, a standard topological argument shows that $f(B(0, r)) \supseteq B(0, r - \varepsilon)$. The map $r \mapsto \lambda(B(0, r))$ is differentiable, hence $\lambda(B(0, r)) - \lambda(B(0, r - \varepsilon)) = O(\varepsilon)$.

    Let $A\subseteq B(0, r)$ be an arbitrary measurable set. It follows from the assumption that $f$ is 1-Lipschitz that $\lambda(f(A))\le \lambda(A)$. This also holds for $B(0, r) \setminus A$, therefore,
    \begin{align*}
        \lambda(f(A)) + \lambda(B(0, r) \setminus A) &\ge
        \lambda(f(A)) + \lambda(f(B(0, r) \setminus A)) \\&\ge
        \lambda(f(B(0, r))) \\&\ge
        \lambda(B(0, r - \varepsilon)) \ge
        \lambda(B(0, r)) - O(\varepsilon) \\&=
        \lambda(A) + \lambda(B(0, r) \setminus A) - O(\varepsilon).
    \end{align*}
    We have shown that $\lambda(f(A)) \ge \lambda(A) - O(\varepsilon)$, completing the proof.
\end{proof}

We will also use the following result of Ciosmak \cite{LipExt}:

\begin{Thm}[{\cite[Proposition 3.8 and Theorem 3.9 (iii)\,$\Rightarrow$\,(ii)]{LipExt}}] \label{dist_ext}
    Let $X$ and $Y$ be real Hilbert spaces, and let $v : X \to Y$ be a 1-Lipschitz affine map. Suppose that for some $A\subseteq X$ and $\varepsilon > 0$, $f : A \to Y$ is a 1-Lipschitz map with the property that $\nr{f(x) - v(x)}\le \varepsilon$ for every $x\in A$. Then $f$ has a 1-Lipschitz extension $\tilde f:X\to Y$ such that $\nr{\tilde f(x) - v(x)}\le \varepsilon$ for every $x\in X$.
\end{Thm}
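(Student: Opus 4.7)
The plan is to prove this Kirszbraun-style extension theorem via the standard one-point extension scheme, adapted to accommodate the additional affine constraint. The overall template follows the classical Hilbert-space Kirszbraun theorem, with the affineness of $v$ providing the extra structure needed to enforce the bound $\nr{\tilde f(x) - v(x)} \le \varepsilon$ everywhere.

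A Zorn's lemma argument on the poset of 1-Lipschitz partial extensions $g$ of $f$ satisfying $\nr{g(x) - v(x)} \le \varepsilon$ on their domain reduces the problem to the single-point extension case: given such $g\colon D \to Y$ with $D \supseteq A$ and a point $x_0 \in X \setminus D$, one must find $y_0 \in Y$ with $\nr{y_0 - g(a)} \le \nr{x_0 - a}$ for all $a \in D$ and $\nr{y_0 - v(x_0)} \le \varepsilon$. Equivalently, the intersection
\[
\overline{B}(v(x_0), \varepsilon) \,\cap\, \bigcap_{a \in D} \overline{B}(g(a), \nr{x_0 - a})
\]
must be nonempty. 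Since each set is closed, convex, and bounded in the Hilbert space $Y$, hence weakly compact, the finite intersection property reduces the problem to the case when $D = \{a_1, \dots, a_n\}$ is finite.

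For the finite case, I would follow the Kirszbraun template: consider the convex, continuous, coercive function
\[
\phi(y) \;=\; \max\!\Big(\max_{1 \le i \le n} \frac{\nr{y - g(a_i)}^2}{\nr{x_0 - a_i}^2},\; \frac{\nr{y - v(x_0)}^2}{\varepsilon^2}\Big),
\]
which attains its minimum at some $y^{*}$; the goal is to show $\phi(y^{*}) \le 1$. Suppose instead that $\phi(y^{*}) = t^2 > 1$. The first-order optimality conditions for this piecewise-quadratic minimum produce nonnegative weights $\beta_1, \dots, \beta_n, \mu$ summing to $1$, supported on active constraints, with $y^{*} = \sum_i \beta_i g(a_i) + \mu\, v(x_0)$. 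Combining the barycentric variance identity in $Y$ with the 1-Lipschitz property of $g$ yields
\[
t^2 \Big(\sum_i \beta_i \nr{x_0 - a_i}^2 + \mu \varepsilon^2\Big) \;\le\; \sum_{1 \le i < j \le n} \beta_i \beta_j \nr{a_i - a_j}^2 + \mu \sum_i \beta_i \nr{g(a_i) - v(x_0)}^2.
\]

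The principal difficulty is the analysis of the mixed terms $\nr{g(a_i) - v(x_0)}^2$. Writing $g(a_i) - v(x_0) = (g(a_i) - v(a_i)) + L(a_i - x_0)$, where $L$ is the 1-Lipschitz linear part of $v$, and using $\nr{g(a_i) - v(a_i)} \le \varepsilon$ together with $\nr{L} \le 1$, one expands the square and exploits the affineness of $v$: the cross term $2\sk{g(a_i) - v(a_i)}{L(a_i - x_0)}$ cannot be controlled pointwise, but should admit favorable cancellations when summed against the KKT weights $\beta_i$, once one applies the barycentric variance identity on the $X$-side as well. Closing this estimate to contradict $t > 1$ is the main technical step, analogous to the cross-term manipulation in Ciosmak's paper, and is where the Hilbert-space structure of both $X$ and $Y$ plays an essential role.
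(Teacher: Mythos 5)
This statement is quoted from Ciosmak's paper \cite{LipExt} (Proposition 3.8 and Theorem 3.9); the paper under review does not prove it, so there is no internal proof to compare against. Evaluating your proposal on its own terms: the Zorn's lemma reduction to a one-point extension, the weak-compactness reduction to a finite family of balls, and the variational/KKT setup for a modified Kirszbraun functional are all reasonable and correctly stated. The problem is that the argument is not closed. You flag the decisive step yourself (``Closing this estimate to contradict $t>1$ is the main technical step'') and then stop; that step is genuinely nontrivial, and the ``favorable cancellations'' you anticipate do not fall out of the ingredients you list.

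Concretely: write $u_i = g(a_i)-v(a_i)$ (so $\nr{u_i}\le\varepsilon$), let $L$ be the linear part of $v$, set $s=\sum_i\beta_i=1-\mu$ and $\bar a=s^{-1}\sum_i\beta_i a_i$. After the barycentric variance identity on the $X$-side, the inequality you need reduces to
\[
2\mu\sum_i\beta_i\sk{u_i}{L(a_i-\bar a)} \;+\; 2\mu\,\Bigl\langle \textstyle\sum_i\beta_i u_i,\, L(\bar a-x_0)\Bigr\rangle \;\le\; s^2\nr{\bar a-x_0}^2 + \mu^2\varepsilon^2.
\]
The second term on the left is at most $2\mu s\varepsilon\nr{\bar a-x_0}$, which AM--GM absorbs \emph{exactly} into the right-hand side, leaving no slack for the first term. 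That first term, which can be rewritten as $\tfrac{2\mu}{s}\sum_{i<j}\beta_i\beta_j\sk{u_i-u_j}{L(a_i-a_j)}$, has no definite sign; the 1-Lipschitz inequality only gives $2\sk{u_i-u_j}{L(a_i-a_j)}\le \nr{a_i-a_j}^2-\nr{L(a_i-a_j)}^2-\nr{u_i-u_j}^2$, and it is not apparent how to combine this with the slack already consumed. Also note that the cheap fix of folding the extra ball into a product-space Kirszbraun problem fails: with $\hat a_i=(a_i,0)$ and $\hat b=(x_0,\varepsilon)$ in $X\oplus\R$, one would need $\nr{g(a_i)-v(x_0)}\le\sqrt{\nr{a_i-x_0}^2+\varepsilon^2}$, but the hypotheses only give $\nr{g(a_i)-v(x_0)}\le \nr{a_i-x_0}+\varepsilon$, which is strictly larger. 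So the proposal, while structurally sensible, leaves the essential estimate unproved and offers no mechanism that plausibly closes it.
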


\begin{proof}[Proof of \cref{dist_to_measure}]
    Let $A\subseteq \R^d$ be a measurable distance Kolmogorov set, and let $\varepsilon > 0$. By the boundedness assumption on $A$, we can choose an $r$ such that $A\subseteq B(0, r)$. For every $\delta > 0$, there is a 1-Lipschitz map $f : A \to \R^d$ such that $f(A)$ is a polygon and $|f(x) - x| \le \delta$ for every $x\in A$. Applying \cref{dist_ext} with $v=\id_{\R^d}$, $f$ extends to $\R^d$ as a 1-Lipschitz map such that $|f(x) - x| \le \delta$ for every $x\in \R^d$. It follows from \cref{dist_to_measure_ball} that if $\delta$ is small enough, then $\lambda(f(A)) \ge \lambda(A) - \varepsilon$. Thus we have shown that $A$ is measure Kolmogorov.
\end{proof}

\section{Generalized strips and the proof of \cref{gstrip_main}}
\label{sec:gstrip}

Suppose that the set $A\subseteq \R^2$ is the union of finitely many strips of small total width. We want to map $A$ into the union of finitely many lines with a 1-Lipschitz map $f$ such that $\sup |f(x) - x|$ is small. For a strip $S\subseteq \R^2$, consider the 1-Lipschitz map $f_S$ that projects $S$ onto its halving line and is a translation on both components of $\R^d\setminus S$. One might try to construct $f$ as the composition of maps of the form $f_S$. However, after each step, the remaining strips may break into two pieces. This means that $\sup |f(x) - x|$ is possibly exponentially large in the number of strips. To avoid this problem, we introduce \emph{generalized strips}. Intuitively, generalized strips let us apply the maps $f_S$ simultaneously, preventing the exponential blow-up.

Before we define generalized strips, we need some concepts from convex analysis. See \cite{ConvAnal} for a reference.

\begin{Def}
    The \emph{proximal map} of a convex function $f : \R^d \to \R$ is defined as
    \[\prox_f(x)=\argmin_{y\in \R^d} \zj*{f(y) + \frac{|x - y|^2}2},\]
    where $\argmin_{y} g(y)$ denotes the unique minimizer of $g$.
\end{Def}

\begin{Prop}[{\cite[Proposition 12.28]{ConvAnal}}] \label{prox_contr}
    For every convex function $f$, $\prox_f$ is 1-Lipschitz.
\end{Prop}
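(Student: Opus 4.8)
The plan is to argue directly from the definition, bypassing subdifferential calculus. Write $h_x(y) = f(y) + \tfrac12|x - y|^2$ for the function being minimized in the definition of $\prox_f(x)$. First I would check that $\prox_f(x)$ is genuinely well defined. A real-valued convex function on $\R^d$ is continuous and admits an affine minorant $y\mapsto a + \sk{b}{y}$, so $h_x(y)\ge a + \sk{b}{y} + \tfrac12|x-y|^2\to+\infty$ as $|y|\to\infty$; hence $h_x$ is continuous and coercive and attains its infimum, and since $h_x$ is strictly convex (the quadratic term is strictly convex and $f$ is convex) the minimizer is unique.

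The heart of the proof is a variational characterization of the minimizer: I claim that $p = \prox_f(x)$ if and only if
\[
    \sk{x - p}{z - p} \le f(z) - f(p)\qquad\text{for all } z\in\R^d.
\]
For the ``if'' direction, expanding $|x - z|^2 = |x - p|^2 + 2\sk{x - p}{p - z} + |p - z|^2$ and inserting the displayed inequality gives $h_x(z)\ge h_x(p) + \tfrac12|p - z|^2\ge h_x(p)$, so $p$ minimizes $h_x$. For the ``only if'' direction, suppose $p$ minimizes $h_x$; for $z\in\R^d$ and $t\in(0,1]$, convexity of $f$ together with expansion of the square yields
\[
    h_x(p)\le h_x\zj{p + t(z - p)}\le h_x(p) + t\zj[\big]{f(z) - f(p) - \sk{x - p}{z - p}} + \frac{t^2}2|z - p|^2,
\]
and dividing by $t$ and letting $t\to 0^+$ gives the inequality. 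This step uses only the convexity of $f$, so it remains valid when $f$ is nondifferentiable.

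With the characterization in hand the conclusion is immediate. Let $p = \prox_f(x)$ and $q = \prox_f(y)$. Applying the characterization to $p$ with test point $z = q$, and to $q$ with test point $z = p$, gives $\sk{x - p}{q - p}\le f(q) - f(p)$ and $\sk{y - q}{p - q}\le f(p) - f(q)$. Adding these and rearranging yields $|p - q|^2\le\sk{x - y}{p - q}$ (so $\prox_f$ is in fact firmly nonexpansive), and Cauchy--Schwarz then gives $|p - q|^2\le|x - y|\,|p - q|$, hence $|p - q|\le|x - y|$.

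I do not expect a real obstacle here; the only delicate point is the ``only if'' direction of the variational characterization, where one must work with the one-sided perturbation $p + t(z - p)$, $t\to 0^+$, rather than with a derivative of $f$ that need not exist. (Alternatively one could derive the characterization from Fermat's rule $0\in\partial h_x(p)$ together with the sum rule $\partial h_x(p) = \partial f(p) + (p - x)$ and monotonicity of $\partial f$, but that route needs more background than the elementary argument above.)
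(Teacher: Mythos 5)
Your proof is correct and self-contained. Note that the paper does not give its own proof of this proposition: it is cited directly from \cite[Proposition 12.28]{ConvAnal}, so there is no in-paper argument to compare against. Your route is the standard one: establish the variational characterization that $p=\prox_f(x)$ if and only if $\sk{x-p}{z-p}\le f(z)-f(p)$ for all $z$ (equivalently, $x-p\in\partial f(p)$), then add the two resulting inequalities for $p=\prox_f(x)$ and $q=\prox_f(y)$ to get the firm nonexpansiveness estimate $|p-q|^2\le\sk{x-y}{p-q}$, which is strictly stronger than the 1-Lipschitz conclusion. Two remarks. First, the characterization you prove along the way is precisely \cref{prox_subdiff}, which the paper also takes from \cite{ConvAnal}; so your argument in effect furnishes elementary proofs of both \cref{prox_contr} and \cref{prox_subdiff} simultaneously, a small economy worth pointing out. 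Second, your handling of the ``only if'' direction via the one-sided perturbation $p+t(z-p)$, $t\to 0^+$, is exactly the right way to avoid assuming differentiability of $f$; together with the affine-minorant coercivity argument for well-definedness (which follows from nonemptiness of the subdifferential for a finite convex function on $\R^d$), the proof is complete.
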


\begin{Def}
    Let $f : \R^d \to \R$ be a function, and let $x\in \R^d$. The \emph{subdifferential} of $f$ at $x$ (denoted by $\partial f(x)$) is the set
    \[\partial f(x) = \ha[\big]{v \in \R^d}
    {\forall y \; f(y) \ge f(x) + \sk{v}{y - x}}.\]
\end{Def}

Most of the time, we will consider convex functions on $\R^d$. In this case, the subdifferential is nonempty everywhere (see \cite[Proposition 16.27]{ConvAnal}). Furthermore, the subdifferential $\partial f(x)$ depends only on $f|_U$, where $U$ is an arbitrary neighborhood of $x$. Since this restriction can be extended to a convex function on $\R^d$ if $U$ is small enough, this allows us to consider the subdifferential of a convex function defined only on some open set.

Note that if $f$ is convex and differentiable at $x$, then $\partial f = \{\nabla f(x)\}$, where $\nabla f$ denotes the gradient of $f$.

\begin{Prop}[{\cite[Proposition 16.44]{ConvAnal}}] \label{prox_subdiff}
    For every $x, y\in \R^d$, $y = \prox_f(x)$ if and only if $x - y \in \partial f(y)$.
\end{Prop}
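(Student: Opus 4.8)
The plan is to prove both implications directly from the definitions, using nothing beyond the convexity of $f$ and the elementary algebraic identity
$\sk{x-y}{z-y} + \tfrac12\abs{x-z}^2 - \tfrac12\abs{x-y}^2 = \tfrac12\abs{z-y}^2$
(with $a = x-y$, $b = z-y$ this is just $\sk{a}{b} + \tfrac12\abs{a-b}^2 = \tfrac12\abs{a}^2 + \tfrac12\abs{b}^2$, i.e. the parallelogram-type expansion of $\abs{a-b}^2$). Throughout, write $g(z) = f(z) + \tfrac12\abs{x-z}^2$; this is strictly convex, so it has a unique minimizer, and by definition $y = \prox_f(x)$ says precisely that $g(z) \ge g(y)$ for all $z \in \R^d$.

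For the easy implication, suppose $x-y \in \partial f(y)$, so $f(z) \ge f(y) + \sk{x-y}{z-y}$ for every $z$. Adding $\tfrac12\abs{x-z}^2$ to both sides and applying the identity above yields $g(z) \ge g(y) + \tfrac12\abs{z-y}^2 \ge g(y)$ for all $z$, which is exactly $y = \prox_f(x)$ (and in fact reproves uniqueness of the minimizer).

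For the converse, suppose $y = \prox_f(x)$. The point is to upgrade the ``zeroth order'' minimality $g(z)\ge g(y)$ to the ``first order'' subgradient inequality. Fix $z$, and for $t \in (0,1]$ apply minimality of $g$ at the convex combination $z_t = (1-t)y + tz$: using $f(z_t) \le (1-t)f(y) + tf(z)$ by convexity of $f$, and expanding $\abs{x-z_t}^2 = \abs{(x-y) - t(z-y)}^2$, the inequality $g(z_t)\ge g(y)$ becomes, after cancelling and dividing by $t$,
$f(z) - f(y) \ge \sk{x-y}{z-y} - \tfrac{t}{2}\abs{z-y}^2$.
Letting $t \to 0^+$ gives $f(z) - f(y) \ge \sk{x-y}{z-y}$ for every $z$, i.e. $x-y \in \partial f(y)$.

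I do not expect a serious obstacle; the only point requiring care is the converse, where one must test minimality along the segment from $y$ to $z$ and pass to the limit (equivalently, differentiate the quadratic along that direction) rather than plug in $z$ itself — the bare inequality $g(z) \ge g(y)$ is strictly weaker than the subgradient condition. (Alternatively, one could invoke Fermat's rule $0 \in \partial g(y)$ together with the sum rule $\partial g(y) = \partial f(y) + \{y - x\}$ for a convex function plus a smooth quadratic, but the hard inclusion of that sum rule needs the same limiting argument, so the self-contained computation above seems preferable.)
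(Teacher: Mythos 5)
Your proof is correct. Note that the paper does not actually prove this statement; it simply cites it from Bauschke--Combettes (Proposition 16.44 of \cite{ConvAnal}), so there is no in-paper argument to compare against. Your argument is the standard one: the forward direction follows by adding the quadratic to the subgradient inequality and completing the square, and the converse correctly upgrades the zeroth-order minimality $g(z)\ge g(y)$ to the subgradient inequality by testing along the segment $z_t=(1-t)y+tz$, using convexity of $f$ to bound $f(z_t)$, and letting $t\to 0^+$. You are right to flag that the naive substitution $z$ alone is insufficient for the converse --- that is exactly where the limiting argument (equivalently, Fermat's rule plus the sum rule for $\partial(f + \tfrac12\abs{x-\cdot}^2)$) is needed. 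This is essentially the textbook proof, so nothing to reconcile with the paper.
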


An easy corollary of \cref{prox_subdiff} is the following fact:

\begin{Prop} \label{prox_dist}
    If $f$ is $L$-Lipschitz, then $|\prox_f(x)-x|\le L$ for every $x\in \R^d$.
\end{Prop}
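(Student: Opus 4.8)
The plan is to combine \cref{prox_subdiff} with the elementary observation that every subgradient of an $L$-Lipschitz (convex) function has norm at most $L$.

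First, fix $x \in \R^d$ and set $y = \prox_f(x)$. By \cref{prox_subdiff}, we have $x - y \in \partial f(y)$, so it suffices to show that $|v| \le L$ for every $v \in \partial f(y)$, since then $|\prox_f(x) - x| = |x - y| \le L$.

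To bound an arbitrary $v \in \partial f(y)$, I would use the defining inequality of the subdifferential: $f(z) \ge f(y) + \sk{v}{z - y}$ for all $z \in \R^d$. If $v = 0$ the bound is trivial; otherwise substitute $z = y + t\,v/|v|$ for $t > 0$, which gives $f(z) - f(y) \ge t|v|$. On the other hand, the $L$-Lipschitz property of $f$ yields $f(z) - f(y) \le L|z - y| = Lt$. Combining the two inequalities gives $t|v| \le Lt$, hence $|v| \le L$. Applying this with $v = x - y$ completes the argument.

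I do not expect any genuine obstacle here: once \cref{prox_subdiff} is invoked, the entire content is the one-line estimate that subgradients are bounded by the Lipschitz constant, which follows directly from the subgradient inequality as sketched above and does not even require convexity of $f$ beyond what is already needed to speak of $\partial f$ and $\prox_f$.
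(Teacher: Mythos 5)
Your proof is correct and is exactly the argument the paper has in mind: the paper labels this an ``easy corollary'' of \cref{prox_subdiff} and omits the details, which are precisely the standard bound that subgradients of an $L$-Lipschitz function have norm at most $L$. Nothing to add.
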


Using the proximal map, we can now define generalized strips.

\begin{Def}
    A function $f:\R^d \to \R$ is \emph{polyhedral} if it is the maximum of finitely many affine functions.
\end{Def}

It is clear that every polyhedral function is convex.

\begin{Def}
    Let $f:\R^d \to \R$ be a polyhedral function. The \emph{generalized strip associated to $f$} (denoted by $S(f)$) is the set of points $x\in \R^d$ such that $f$ is not differentiable at $\prox_f(x)$.

    A set $S\subseteq \R^d$ is called a \emph{generalized strip} if $S=S(f)$ for some polyhedral function $f$. The \emph{width} $w(S)$ of a generalized strip is defined as
    \[w(S)=2\inf\ha{\Lip f}{S = S(f)},\]
    where $\Lip f$ is the Lipschitz constant of $f$.
\end{Def}

See \cref{fig:gstrip} for an example of a generalized strip in the plane.

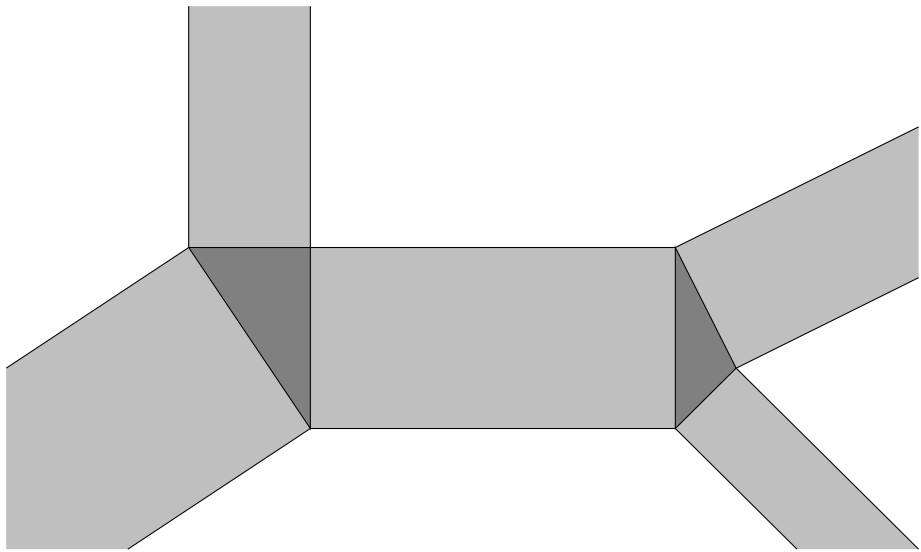
\begin{figure}
    \centering
    \begin{tikzpicture}[scale=0.8]
        \fill[lightgray]
        (-2, 5) -- (-2, 1) -- (4, 1) -- (8, 3) --
        (8, 0.5) -- (5, -1) -- (8, -4) --
        (6, -4) -- (4, -2) -- (-2, -2) -- (-5, -4) --
        (-7, -4) --
        (-7, -1) -- (-4, 1) -- (-4, 5);
    
        \draw[fill=gray]
            (-2, 1) -- (-2, -2) -- (-4, 1) -- cycle;
        \draw[fill=gray]
            (4, 1) -- (5, -1) -- (4, -2) -- cycle;
    
        \draw (-2, 5) -- (-2, 1) -- (4, 1) -- (8, 3);
        \draw (8, 0.5) -- (5, -1) -- (8, -4);
        \draw (6, -4) -- (4, -2) -- (-2, -2) -- (-5, -4);
        \draw (-7, -1) -- (-4, 1) -- (-4, 5);
        
    \end{tikzpicture}
    \caption{The generalized strip $S$ associated to $f(x, y) = \max(y, -2y, x - y - 4, -2x + y - 4)$. Outside $S$, $\prox_f$ is a translation on each connected component. On the rectangular parts of $S$, $\prox_f$ is a projection onto a side combined with a translation. The darker triangles are both mapped to a single point.}
    \label{fig:gstrip}
    \end{figure}

Note that the function $f$ is not unique. For example, if $c$ is a constant, then $S(f)=S(f + c)$.

\begin{Def}
    A \emph{strip} of width $w > 0$ in $\R^d$ is the closed region between two parallel hyperplanes of distance $w$.
\end{Def}

\begin{Prop} \label{strip_is_gstrip}
    A strip $S\subseteq \R^d$ of width $w$ is a generalized strip of width $w$.
\end{Prop}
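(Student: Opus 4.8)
\emph{Proof plan.} The plan is to show the two estimates $2\Lip f\le w$ for a suitable polyhedral $f$, and $2\Lip g\ge w$ for every polyhedral $g$ with $S(g)=S$; together these give $w(S)=w$ and exhibit $S$ as a generalized strip. Since a rigid motion of $\R^d$ carries a polyhedral function to a polyhedral one with the same Lipschitz constant and transforms its associated generalized strip by the same motion, we may assume $S=\ha{x\in\R^d}{\abs{x_1}\le w/2}$.

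For $w(S)\le w$ I would take the polyhedral function $f(x)=\max\zj{\tfrac{w}{2}x_1,-\tfrac{w}{2}x_1}=\tfrac{w}{2}\abs{x_1}$, which has $\Lip f=w/2$. One computes $\prox_f$ directly: it fixes the last $d-1$ coordinates and, in the first coordinate, sends $t$ to $0$ when $\abs{t}\le w/2$ and to $t-w/2$ or $t+w/2$ according to the sign of $t$ otherwise; thus $\prox_f$ projects $S$ onto the hyperplane $\{x_1=0\}$ and is a translation on each of the two components of $\R^d\setminus S$ (this is the map $f_S$ from the introduction). Since $f$ is nondifferentiable precisely on $\{x_1=0\}$, the first coordinate of $\prox_f(x)$ vanishes if and only if $\abs{x_1}\le w/2$, whence $S(f)=S$ and $w(S)\le 2\Lip f=w$.

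For the reverse inequality, let $g$ be any polyhedral function with $S(g)=S$; I must show $\Lip g\ge w/2$. Write $g=\max_i\ell_i$ with the $\ell_i$ pairwise distinct affine functions (duplicates may be discarded), and let $N$ be the set where $g$ fails to be differentiable. Two elementary structural facts are used: first, $N$ is contained in the finite union of the hyperplanes $\{\ell_i=\ell_j\}$ over pairs with $\nabla\ell_i\ne\nabla\ell_j$, so $\lambda(N)=0$; second, $\R^d\setminus N$ is the disjoint union of the open convex sets $U_i=\ha{x}{\ell_i(x)>\ell_j(x)\ \forall j\ne i}$, on each of which $\partial g\equiv\{\nabla\ell_i\}$. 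By definition $S(g)=\prox_g^{-1}(N)$, so $\prox_g$ maps each of the two connected components $H^{\pm}=\{\pm x_1>w/2\}$ of $\R^d\setminus S$ into $\R^d\setminus N$; by connectedness (using that $\prox_g$ is continuous, \cref{prox_contr}) each image $\prox_g(H^{\pm})$ lies in a single $U_{i^{\pm}}$, and then \cref{prox_subdiff} forces $\prox_g$ to act on $H^{\pm}$ as the translation $x\mapsto x-v^{\pm}$, where $v^{\pm}=\nabla\ell_{i^{\pm}}$ and $\abs{v^{\pm}}\le\Lip g$. Hence $\prox_g(H^{+})=\ha{y}{y_1>w/2-v^{+}_1}$ and $\prox_g(H^{-})=\ha{y}{y_1<-w/2-v^{-}_1}$. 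Finally $\prox_g$ is surjective: for any $y$ one picks $v\in\partial g(y)$ (nonempty, $g$ being convex on $\R^d$) and \cref{prox_subdiff} gives $\prox_g(y+v)=y$. Combined with $\prox_g(S)\subseteq N$, this yields $\ha{y}{-w/2-v^{-}_1\le y_1\le w/2-v^{+}_1}\subseteq N$, a slab that must therefore be Lebesgue null; this forces $v^{+}_1-v^{-}_1\ge w$, and so $2\Lip g\ge\abs{v^{+}}+\abs{v^{-}}\ge\abs{v^{+}-v^{-}}\ge v^{+}_1-v^{-}_1\ge w$. Taking the infimum over $g$ gives $w(S)\ge w$, completing the proof.

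The main obstacle is the reverse inequality, and within it two points: (i) that $\prox_g$ must act as one single translation on each half-space $H^{\pm}$, which rests on the combinatorial structure of maxima of affine functions together with the subdifferential characterization of $\prox$ in \cref{prox_subdiff}; and (ii) that the surjectivity of $\prox_g$, together with $\lambda(N)=0$, pins down the displacement gap $v^{+}_1-v^{-}_1$ between the two half-spaces. The reduction by a rigid motion and the explicit computation in the upper-bound step are routine.
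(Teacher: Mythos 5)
Your proposal is correct. The upper-bound half is essentially the paper's argument in slightly different coordinates: you take $f(x)=\tfrac{w}{2}\abs{x_1}$ after a rigid motion, while the paper works directly with $f=\abs{\sk{\cdot}{v}+c}$ for a normal vector $v$ of length $w/2$; both compute $\prox_f$ and identify $S(f)=S$.

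The lower-bound half is where you genuinely diverge. You give a global structural analysis of $\prox_g$: decompose $\R^d\setminus N$ (where $N$ is the non-differentiability set of $g$) into the open convex cells $U_i$, use connectedness and \cref{prox_subdiff} to show that $\prox_g$ is a single translation by $\nabla\ell_{i^{\pm}}$ on each half-space $H^{\pm}$, invoke surjectivity of $\prox_g$ to trap a slab of $\R^d$ inside $N$, and then use $\lambda(N)=0$ to force the gap $v_1^{+}-v_1^{-}\ge w$. This is correct, though you should be slightly careful to note that after discarding duplicate $\ell_i$ the remaining cells $U_i$ really do exhaust $\R^d\setminus N$ and are its connected components (this requires observing that if $g$ is differentiable at $x$ then the maximizing index is unique, which uses that an affine function nonnegative on a neighborhood and vanishing at the center is identically zero). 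The paper's lower-bound argument is much more local and shorter: it simply picks a point $x$ in the $\varepsilon$-neighborhood of the halving hyperplane at which $g$ is differentiable, observes from \cref{prox_subdiff} that $\prox_g(x+\nabla g(x))=x$ so $x+\nabla g(x)\notin S(g)=S$, and concludes $\abs{\nabla g(x)}\ge w/2-\varepsilon$ from $B(x,w/2-\varepsilon)\subseteq S$. Your approach gives more structural information about $\prox_g$ (it literally recovers the picture of a translation on each side of the strip) at the cost of a longer argument; the paper's approach is the minimal one-point argument needed to extract the Lipschitz bound. Both are valid.
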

\begin{proof}
    Let $v$ be a normal vector of length $w/2$ of the hyperplanes bounding $S$. It is easy to check that $S$ can be written as
    \[S=\ha[\big]{x}{|\sk{x}{v} + c|\le |v|^2}\]
    for some constant $c$. Let $f_0(x) = \sk{x}{v} + c$ and $f=|f_0|$. We will show that $S=S(f)$.

    It is clear that the set of points where $f$ is not differentiable is the hyperplane $f_0^{-1}(0)$. Notice that for $x\in f_0^{-1}(0)$, every vector in the subdifferential $\partial f(x)$ is orthogonal to this hyperplane, or equivalently, parallel to $v$. From this, we can check that the subdifferential $\partial f(x)$ is the line segment $[-v, v]$. Therefore, by \cref{prox_subdiff},
    \[S(f)=f_0^{-1}(0) + [-v, v] = f_0^{-1}\zj[\big]{[-|v|^2, |v|^2]} = S.\]
    Since $\Lip f=|v|=w/2$, this also shows that $w(S) \le w$.

    To prove that $w(S)\ge w$, we have to show that $\Lip g\ge w/2$ whenever $S=S(g)$. Let $\varepsilon > 0$. Since $g$ is polyhedral, it is almost everywhere differentiable. Choose an $x$ from the $\varepsilon$-neighborhood of $f_0^{-1}(0)$ such that $g$ is differentiable at $x$. By \cref{prox_subdiff}, $\prox_g(x + \nabla g(x))=x$, hence $x + \nabla g(x)\notin S(g)=S$. Since $B(x, w/2-\varepsilon)\subseteq S$, it follows that $|\nabla g(x)|\ge w/2-\varepsilon$, therefore, $\Lip g\ge w/2 - \varepsilon$. Taking the limit $\varepsilon \to 0$ yields the bound $\Lip g\ge w/2$.
\end{proof}

\begin{Def}
    We define the outer measure $\gamma : \mathcal P(\R^d) \to [0, \infty]$ as
    \[\gamma(A) = \inf \ha*{\sum_{i=1}^\infty w(S_i)}{\text{$S_1, S_2, \dots$ are generalized strips such that $A \subseteq \bigcup_{i=1}^\infty S_i$}}.\]
\end{Def}
The $\sigma$-subadditivity is clear from the definition. We can check that $S(0)=\emptyset$, which shows that the empty set is a generalized strip with $w(\emptyset)=0$. Therefore, $\gamma(\emptyset) = 0$, so it is indeed an outer measure.

Recall that a set in $\R^d$ is \emph{strip-null} if it can be covered by strips of arbitrarily small total width. The following proposition is an easy consequence of \cref{strip_is_gstrip}:

\begin{Prop} \label{strip_null_is_gstrip_null}
    If $A$ is strip-null, then $\gamma(A)=0$.
\end{Prop}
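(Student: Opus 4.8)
The plan is simply to observe that every admissible covering of $A$ by strips is also an admissible covering by generalized strips of the same total width, so that the infimum defining $\gamma(A)$ is bounded above by the corresponding strip-covering infimum, which is $0$ by hypothesis.

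In detail, I would fix $\varepsilon > 0$. Since $A$ is strip-null, there are strips $S_1, S_2, \dots$ (finitely or countably many) with $A \subseteq \bigcup_i S_i$ and $\sum_i w(S_i) < \varepsilon$, where here $w(S_i)$ denotes the geometric width of the strip $S_i$. By \cref{strip_is_gstrip}, each $S_i$ is a generalized strip whose generalized-strip width equals its geometric width, so there is no ambiguity in writing $w(S_i)$. If the cover is finite, I would pad the sequence with countably many copies of $\emptyset = S(0)$, which is a generalized strip of width $0$; this does not change the total width. Thus $S_1, S_2, \dots$ is a sequence of generalized strips covering $A$ with $\sum_i w(S_i) < \varepsilon$, and by the definition of $\gamma$ we get $\gamma(A) \le \sum_i w(S_i) < \varepsilon$. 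Letting $\varepsilon \to 0$ gives $\gamma(A) = 0$.

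There is essentially no obstacle here: the only points requiring care are the two mild bookkeeping issues noted above, namely that the two notions of "width" agree on ordinary strips (which is precisely the content of \cref{strip_is_gstrip}), and that a finite strip cover can be completed to a countable generalized-strip cover using empty generalized strips without increasing the total width.
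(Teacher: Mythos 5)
Your proof is correct and is exactly what the paper has in mind: the paper does not spell out the argument, stating only that the proposition ``is an easy consequence of'' \cref{strip_is_gstrip}, and your write-up supplies precisely that routine verification (strip covers are generalized-strip covers of the same total width, so the infimum defining $\gamma$ is bounded by the strip-cover infimum). The two bookkeeping points you flag, agreement of the two notions of width via \cref{strip_is_gstrip} and padding a finite cover with $S(0)=\emptyset$, are the right ones to note.
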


\begin{Prop} \label{gstrip_null_is_null}
    If $\gamma(A) = 0$, then $A$ is Lebesgue null.
\end{Prop}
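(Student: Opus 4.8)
The plan is to isolate a single local estimate and then let the total width of a covering tend to zero. Concretely, I would first establish: for every generalized strip $S$ and every $R>0$,
\[\lambda\zj[\big]{S\cap B(0,R)}\le \lambda\zj{B(0,1)}\,d\,R^{d-1}\,w(S).\]
Granting this, the proposition follows at once. If $\gamma(A)=0$, then for each $n$ there are generalized strips $S^{(n)}_1,S^{(n)}_2,\dots$ covering $A$ with $\sum_i w\zj[\big]{S^{(n)}_i}<1/n$, so by $\sigma$-subadditivity of Lebesgue outer measure together with the estimate, $\lambda^*\zj[\big]{A\cap B(0,R)}\le \lambda\zj{B(0,1)}\,d\,R^{d-1}/n$ for all $n$; hence $\lambda^*\zj[\big]{A\cap B(0,R)}=0$ for every $R$, and writing $A=\bigcup_{n}\zj[\big]{A\cap B(0,n)}$ as a countable union of null sets gives $\lambda^*(A)=0$.

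For the local estimate I would argue as follows. Write $S=S(f)$ with $f$ polyhedral and set $L:=\Lip f$; since $w(S)=2\inf\ha{\Lip f}{S=S(f)}$, it suffices to prove $\lambda\zj[\big]{S\cap B(0,R)}\le\lambda\zj{B(0,1)}\,d\,R^{d-1}L$ for each such representative $f$ (the case $L\ge R$ being trivial, as then $\lambda\zj[\big]{S\cap B(0,R)}\le\lambda\zj[\big]{B(0,R)}\le\lambda\zj{B(0,1)}R^{d-1}L$). Let $N$ be the set where $f$ is not differentiable; since $f$ is polyhedral, $N$ is a finite union of polyhedra of dimension $\le d-1$, so $\lambda(N)=0$, and $S=S(f)=\prox_f^{-1}(N)$ is closed, hence measurable. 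The crux is the inclusion
\[\prox_f\zj[\big]{B(0,R)\setminus S}\;\supseteq\;B(0,R-L)\setminus N .\]
To see it, take $y\in B(0,R-L)\setminus N$ and put $z:=y+\nabla f(y)$. Then $\abs{z-y}=\abs{\nabla f(y)}\le\Lip f=L$, so $z\in B(0,R)$; since $z-y=\nabla f(y)\in\partial f(y)$, \cref{prox_subdiff} gives $\prox_f(z)=y$; and as $f$ is differentiable at $y=\prox_f(z)$, the definition of a generalized strip shows $z\notin S$. Because $\prox_f$ is $1$-Lipschitz (\cref{prox_contr}), it does not increase Lebesgue measure, so
\[\lambda\zj[\big]{B(0,R)\setminus S}\ge\lambda\zj[\big]{\prox_f\zj[\big]{B(0,R)\setminus S}}\ge\lambda\zj[\big]{B(0,R-L)\setminus N}=\lambda\zj[\big]{B(0,R-L)},\]
whence $\lambda\zj[\big]{S\cap B(0,R)}=\lambda\zj[\big]{B(0,R)}-\lambda\zj[\big]{B(0,R)\setminus S}\le\lambda\zj{B(0,1)}\zj[\big]{R^d-(R-L)^d}\le\lambda\zj{B(0,1)}\,d\,R^{d-1}L$. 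Taking the infimum over representatives of $S$ replaces $L$ by $w(S)/2$ and finishes the estimate with room to spare.

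I expect the only genuinely delicate point to be choosing the right object, rather than any calculation. The tempting approach is the ``neighbourhood'' bound: $\prox_f$ moves points by at most $L$, so $S(f)$ lies within distance $L$ of the crease set $N$, hence $\lambda\zj[\big]{S(f)\cap B(0,R)}\le 2L\cdot\mathcal H^{d-1}\zj[\big]{N\cap B(0,R+L)}$. This is useless, because $\mathcal H^{d-1}\zj[\big]{N\cap B(0,R)}$ is not controlled by $\Lip f$ (a sufficiently fine polyhedral approximation of a smooth convex function has arbitrarily large total crease area while keeping $\Lip f$ bounded). The inclusion above sidesteps this entirely: it exhibits, for a full-measure set of $y$ in the slightly smaller ball, an explicit preimage $z=y+\nabla f(y)$ lying in $B(0,R)\setminus S$, and then one only needs that $\prox_f$ is measure-nonincreasing. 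Once this is set up, the rest is bookkeeping.
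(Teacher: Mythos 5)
Your proof is correct, and at a high level it follows the same strategy as the paper: bound $\lambda\zj[\big]{S\cap B(0,R)}$ by a constant times $w(S)$, then sum over a cover and let the total width go to zero. The difference lies in how the local estimate is obtained. The paper invokes its \cref{dist_to_measure_ball}, which applies to \emph{any} $1$-Lipschitz map with small displacement and whose proof rests on the topological fact that such a map sends $B(0,r)$ onto a set containing $B(0,r-\varepsilon)$; it then applies this to $\prox_{f_n}$ and uses $\lambda\zj[\big]{\prox_{f_n}(S_n)}=0$. You instead exploit the explicit structure of $\prox_f$ for a polyhedral $f$: for every differentiability point $y$, the point $z=y+\nabla f(y)$ is an explicit preimage lying outside $S$, which gives the inclusion $\prox_f\zj[\big]{B(0,R)\setminus S}\supseteq B(0,R-L)\setminus N$ without any topological machinery. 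This makes the argument self-contained, gives an explicit constant, and avoids extending $\prox_f$'s domain considerations. One very small inefficiency: after taking the infimum over representatives you actually get the sharper $\lambda\zj[\big]{S\cap B(0,R)}\le\lambda\zj{B(0,1)}\,d\,R^{d-1}\,w(S)/2$; stating the looser bound with $w(S)$ is of course still fine. Your closing remark explaining why the naive Minkowski-neighbourhood estimate $\lambda\zj[\big]{S\cap B(0,R)}\le 2L\cdot\mathcal H^{d-1}(N\cap B(0,R+L))$ fails is apt, since $\mathcal H^{d-1}(N)$ is indeed not controlled by $\Lip f$.
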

\begin{proof}
    Since $A$ can be decomposed into countably many bounded pieces, it suffices to consider the case when $A$ is bounded. Choose an $r$ such that $A\subseteq B(0, r)$. Let $\varepsilon > 0$, and choose a sequence of generalized strips $S_1=S(f_1), S_2=S(f_2), \dots$ such that $A \subseteq \bigcup_{n=1}^\infty S_n$ and $\sum_{n=1}^\infty \Lip f_n < \varepsilon$. We can apply \cref{dist_to_measure_ball} to $\prox_{f_n}$ by \cref{prox_contr,prox_dist}: if $\varepsilon$ is small enough, then
    \begin{equation} \label{gstrip_null_is_null.bound}
        \lambda(S_n \cap B(0, r)) - \lambda(\prox_{f_n}(S_n \cap B(0, r))) \le c \Lip f_n
    \end{equation}
    for some constant $c$. Since $f_n$ is differentiable almost everywhere, it follows from the definition of generalized strip that $\lambda(\prox_{f_n}(S_n)) = 0$. Combined with \cref{gstrip_null_is_null.bound}, this yields the bound $\lambda(S_n \cap B(0, r)) \le c \Lip f_n$. Thus we obtain
    \[
        \lambda(A) \le
        \lambda\zj*{\bigcup_{n=1}^\infty (S_n \cap B(0, r))} \le
        \sum_{n=1}^\infty \lambda(S_n \cap B(0, r)) \le
        \sum_{n=1}^\infty c \Lip f_n \le c\, \varepsilon.
    \]
    As $c$ does not depend on $\varepsilon$, taking the limit $\varepsilon \to 0$ shows that $\lambda(A) = 0$.
\end{proof}

\begin{Prop} \label{gstrip_polyhedron}
    Every generalized strip is a polyhedron.
\end{Prop}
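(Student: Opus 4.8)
The plan is to decompose $\R^d$ according to the \emph{active set} of the polyhedral function $f$ and to use \cref{prox_subdiff} to describe $\prox_f^{-1}$ explicitly. Write $f=\max(a_1,\dots,a_k)$ with the $a_i$ affine, and denote by $\nabla a_i$ the linear part of $a_i$. For each nonempty $I\subseteq\{1,\dots,k\}$ let $E_I$ be the set of points $y$ whose active set $\{i : a_i(y)=f(y)\}$ equals $I$. The sets $E_I$ partition $\R^d$, and each $E_I$ is the intersection of the affine subspace $\{y : a_i(y)=a_j(y)\ \text{for all } i,j\in I\}$ with the finitely many open halfspaces $\{y : a_i(y)>a_l(y)\}$ for $i\in I$, $l\notin I$; hence $E_I$ is a relatively open convex polyhedron and its closure $\overline{E_I}$ is a polyhedron. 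On a neighbourhood of any $y\in E_I$ the inactive pieces stay strictly below, so $f$ agrees there with $\max_{i\in I}a_i$, and by the standard subdifferential rule for a maximum of affine functions (see \cite{ConvAnal}) one gets $\partial f(y)=P_I:=\conv\{\nabla a_i : i\in I\}$ for every $y\in E_I$. In particular $\partial f$ is constant on each $E_I$, and since a convex function on $\R^d$ is differentiable at a point exactly when its subdifferential there is a singleton, $f$ fails to be differentiable at $y\in E_I$ precisely when $P_I$ is not a single point.

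Next I would read off $\prox_f^{-1}$. By \cref{prox_subdiff}, $x\in\prox_f^{-1}(y)$ iff $x-y\in\partial f(y)$, that is, $\prox_f^{-1}(y)=y+\partial f(y)$; therefore $\prox_f^{-1}(E_I)=E_I+P_I$, a Minkowski sum. Letting $\mathcal I$ be the collection of those $I$ for which $P_I$ is not a single point, this gives
\[
S(f)=\prox_f^{-1}\bigl(\{y : f \text{ is not differentiable at } y\}\bigr)=\bigcup_{I\in\mathcal I}\bigl(E_I+P_I\bigr),
\]
a finite union of convex sets $Q_I:=E_I+P_I$, each of which is contained in $S(f)$. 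Since $P_I$ is a polytope and $\overline{E_I}$ is a polyhedron, $\overline{Q_I}=\overline{E_I}+P_I$ is again a polyhedron (the Minkowski sum of a polyhedron and a polytope is a polyhedron). Consequently $\partial Q_I=\partial\overline{Q_I}$ is covered by finitely many hyperplanes: the bounding hyperplanes of $\overline{Q_I}$ when $Q_I$ is full-dimensional, or a single hyperplane containing $\overline{Q_I}$ otherwise.

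Finally I would conclude with a routine boundary-of-a-finite-union argument. If $x\in\partial S(f)$, then $x\in\overline{S(f)}=\bigcup_{I\in\mathcal I}\overline{Q_I}$, so $x\in\overline{Q_{I_0}}$ for some $I_0\in\mathcal I$; since $Q_{I_0}\subseteq S(f)$ we have $\Int Q_{I_0}\subseteq\Int S(f)$, and as $x\notin\Int S(f)$ it follows that $x\notin\Int Q_{I_0}$, whence $x\in\partial Q_{I_0}$. Thus $\partial S(f)\subseteq\bigcup_{I\in\mathcal I}\partial Q_I$ is covered by finitely many hyperplanes, i.e.\ $S(f)$ is a polyhedron (when $\mathcal I=\emptyset$ we simply have $S(f)=\emptyset$, which is a polyhedron). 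I expect the only step beyond bookkeeping to be the fact that $E_I+P_I$ has polyhedral closure, and even this reduces to the standard Minkowski-sum fact for polyhedra; the one thing to watch is that the pieces $Q_I$ are only relatively open, not closed, so closures must be taken at the right moments.
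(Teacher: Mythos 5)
Your proof is correct, and it takes a genuinely different route from the paper's. The paper argues locally: at a boundary point $x$ it sets $y=\prox_f(x)$, notes that $f_i(y)=f_j(y)$ for some $i\ne j$, and shows by a perturbation argument that $x$ must lie on one of the finitely many explicit hyperplanes $H_{i,j,k}=\{x : (f_i-f_j)(x-v_k)=0\}$; otherwise a full neighbourhood of $x$ would lie inside $S(f)$, contradicting $x\in\partial S(f)$. You instead give a global structural description: partition $\R^d$ into the relatively open cells $E_I$ by active set, observe $\partial f\equiv P_I=\conv\{\nabla a_i:i\in I\}$ on $E_I$, read off $\prox_f^{-1}(E_I)=E_I+P_I$ from \cref{prox_subdiff}, and conclude $S(f)=\bigcup_{I\in\mathcal I}(E_I+P_I)$ with $\mathcal I$ the indices where $P_I$ is not a singleton. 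Since each $\overline{E_I+P_I}=\overline{E_I}+P_I$ is a (convex) polyhedron, and for a convex set $\partial Q_I=\partial\overline{Q_I}$, the boundary of the finite union is covered by finitely many hyperplanes. Your approach buys a complete explicit decomposition of $S(f)$ as a finite union of convex polyhedral pieces (more information than the statement requires), at the cost of the Minkowski-sum/closure bookkeeping; the paper's version is shorter and more elementary, exhibiting only the covering hyperplanes without describing the set itself. The small points you should be sure of are exactly the ones you flagged: that $\overline{E_I}$ is the closed polyhedron obtained by relaxing the strict inequalities (true when $E_I\ne\emptyset$, by a segment argument from an interior point), that $\overline{E_I}+P_I$ is closed because $P_I$ is compact, and that $\Int Q_I=\Int\overline{Q_I}$ for the convex set $Q_I$ so that $\partial Q_I=\partial\overline{Q_I}$.
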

\begin{proof}
    Let $f=\max_i f_i$, where the $f_i$ are finitely many affine functions. We will show that $S(f)$ is a polyhedron. Let $v_i$ be the gradient of $f_i$. We may assume that the $v_i$ are pairwise distinct. It suffices to show that each point $x\in \partial(S(f))$ is contained in some hyperplane of the form
    \[H_{i, j, k} = \ha[\big]{x}{(f_i - f_j)(x - v_k)=0}\]
    with $i\ne j$. The set of points where $f$ is not differentiable is closed, so $S(f)$ is also closed. Therefore, $f$ is not differentiable at $y=\prox_f(x)$. This means that $f(y)=f_i(y)=f_j(y)$ for some $i\ne j$.
    
    Suppose for contradiction that $x\notin H_{i, j, k}$ for every $k$. Since $(f_i - f_j)(y) = 0$, this implies that $y\ne x - v_k$, or equivalently, $x - \prox_f(x)\ne v_k$. By continuity, there is a neighborhood $U\ni x$ such that for every $x'\in U$ and $k$, $x' - \prox_f(x')\ne v_k$. If $f$ is differentiable at $y'=\prox_f(x')$, then $f=f_k$ in some neighborhood of $y'$, from which it follows by \cref{prox_subdiff} that $x'-y'=v_k$. Therefore, $x'\in S(f)$ for every $x'\in U$. This contradicts the assumption that $x\in \partial(S(f))$.
\end{proof}

We will now begin proving \cref{gstrip_main}. The idea of the proof is the following: using a compactness argument, we can choose finitely many generalized strips of small total width covering the boundary. Then, we can combine them into a generalized strip of small width, which can be used to construct a 1-Lipschitz map with the required properties. This combination is provided by the following key lemma:

\begin{Lemma} \label{gstrip_union}
    Let $f$ and $g$ be polyhedral functions on $\R^d$. Then there exists a polyhedral function $h$ such that $S(f) \cup S(g) \subseteq S(h)$ and $\Lip h \le \Lip f + \Lip g$.
\end{Lemma}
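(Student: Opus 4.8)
The plan is to construct $h$ explicitly from affine representations. Write $f=\max_{i} f_i$ and $g=\max_{j} g_j$ as maxima of finitely many affine functions, discarding any piece that is dominated by the remaining ones everywhere (so that $\Lip f=\max_i\nr{\nabla f_i}$ and $\Lip g=\max_j\nr{\nabla g_j}$), and look for $h$ of the form $h=\max_{i,j}\zj{f_i+g_j+c_{ij}}$ for real constants $c_{ij}$ to be determined. Such an $h$ is automatically polyhedral, and since every affine piece $f_i+g_j+c_{ij}$ has gradient $\nabla f_i+\nabla g_j$, of norm at most $\Lip f+\Lip g$, we get $\Lip h\le\Lip f+\Lip g$ regardless of the $c_{ij}$. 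So the lemma reduces entirely to choosing the $c_{ij}$ so that $S(f)\cup S(g)\subseteq S(h)$.

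To choose them, I would force the non-differentiability structure of $h$ to contain translated copies of those of $f$ and $g$. The guiding identity is $\prox_h=\prox_f+\prox_g-\id$ (one checks the right-hand side is again $1$-Lipschitz). Granting an $h$ with this property, fix $x\in S(f)$, let $y:=\prox_f(x)$ — a point where $f$ is not differentiable — and put $u:=x-\prox_g(x)$, which lies in $\partial g(\prox_g(x))$ by \cref{prox_subdiff}. Then $\prox_h(x)=y-u$, so $x-\prox_h(x)=(x-y)+u$; since $x-y\in\partial f(y)$ by \cref{prox_subdiff} and $\partial f(y)$ is not a singleton, it suffices to prove the inclusion $\partial h(y-u)\supseteq\partial f(y)+u$. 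Its right-hand side is then a non-singleton subset of $\partial h(\prox_h(x))$ that contains $x-\prox_h(x)$, so $h$ is not differentiable at $\prox_h(x)$, i.e., $x\in S(h)$; the case $x\in S(g)$ is symmetric. Thus the $c_{ij}$ must be chosen so that each crease of $f$ reappears in $h$, shifted by the displacement that $\prox_g$ performs near it, and dually for $g$; for pairs $(i,j)$ whose creases are parallel one expects $c_{ij}=0$ (so that $h=f+g$ locally), while for transverse pairs the $c_{ij}$ have to be adjusted. This is a direct, if lengthy, computation on the polyhedral subdivisions induced by $f$ and $g$.

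The main obstacle is that the guiding identity cannot hold everywhere: $\prox_f+\prox_g-\id$ need not be a proximal map at all. Where $S(f)$ and $S(g)$ overlap and $\prox_f,\prox_g$ each collapse a full-dimensional face — as $\max(\abs{x_1},\abs{x_2})$ does near the origin — its Jacobian develops a negative eigenvalue, so it is not the Hessian of any convex function, and no polyhedral $h$ realizes it. The way around this is that the width budget has slack: a point of $S(f)\cap S(g)$ need only be covered once by $S(h)$, while $\Lip f$ and $\Lip g$ still add, so on the overlap one can replace the sum construction by a single wider generalized strip. Concretely, I would split $\R^d$ according to whether at least one of $\prox_f,\prox_g$ acts as a pure translation near the relevant point, use the sum construction where it does and a direct merging where neither does, and check that the two prescriptions for the $c_{ij}$ agree and produce a single polyhedral convex $h$ with the Lipschitz bound preserved. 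Making this patching precise — equivalently, writing the $c_{ij}$ down uniformly — is where the real work lies; once $h$ is in hand, that it is polyhedral and the verification of the subdifferential inclusions via \cref{prox_subdiff} are routine.
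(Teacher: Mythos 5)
Your ansatz $h=\max_{i,j}(f_i+g_j+c_{ij})$ and the observation that the Lipschitz bound $\Lip h\le\Lip f+\Lip g$ is automatic are exactly right, and match the paper. But the proposal never actually pins down the constants $c_{ij}$, and you say so yourself (\emph{``Making this patching precise \dots is where the real work lies''}); this is the crux of the lemma, so the proof is incomplete. The paper's choice is $c_{ij}=\sk{v_i}{u_j}$, where $v_i=\nabla f_i$ and $u_j=\nabla g_j$, and with that choice no patching is needed at all.

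The reason your route stalls is that you try to prove $S(f)\subseteq S(h)$ \emph{directly}, which forces you to describe $\prox_h$ \emph{on} $S(h)$, where it is complicated. That is what leads you to the heuristic identity $\prox_h=\prox_f+\prox_g-\id$ and to the correct observation that this identity cannot hold globally (it fails already for $f=g=|x|$ on $\R$: there $\prox_f+\prox_g-\id$ is decreasing on $[-1,1]$, hence not a proximal map, while the paper's $h=2|x|+1$ simply has $\prox_h\equiv 0$ there). The paper sidesteps all of this by arguing the contrapositive: assume $x\notin S(h)$, set $y=\prox_h(x)$, so $h$ equals some $f_i+g_j+\sk{v_i}{u_j}$ near $y$ and $x=y+v_i+u_j$; then for any $i'$ the defining inequality $h\ge f_{i'}+g_j+\sk{v_{i'}}{u_j}$ rearranges to $f_i(z+u_j)\ge f_{i'}(z+u_j)$ for $z$ near $y$, so $f=f_i$ near $y+u_j$, $\prox_f(x)=y+u_j$, and $x\notin S(f)$. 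This only uses $\prox_h$ \emph{off} $S(h)$, where it is a translation, so the degenerate overlap region you worried about never enters. (Your identity does hold off $S(h)$ for the paper's $h$ — one can check $\prox_f(x)+\prox_g(x)-x=y$ there — but that is a consequence, not an input.) Your tentative guess that $c_{ij}=0$ works when the creases are ``parallel'' is also not the general rule: e.g.\ with $f=g=|x|$ the gradients are $\pm1$, so the paper's $c_{ij}=\pm1$, not $0$, though in that toy case $c_{ij}=0$ happens to work as well. To complete your argument you would need to abandon the global identity, write down a concrete choice of $c_{ij}$, and verify the inclusion — at which point you are essentially rediscovering the paper's proof.
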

\begin{proof}
    Let $f=\max_i f_i$ and $g=\max_j g_j$, where the $f_i$ and $g_j$ are finitely many affine functions. Let $v_i$ and $u_j$ be the gradients of $f_i$ and $g_j$, respectively. Define the function
    \[h=\max_{i, j} \zj[\big]{f_i + g_j + \sk{v_i}{u_j}}.\]
    It is clear that $\Lip f=\max_i |v_i|$, $\Lip g = \max_j |u_j|$ and $\Lip h = \max_{i, j} |v_i + u_j|$, from which the property $\Lip h \le \Lip f + \Lip g$ follows.

    We will now check that $S(f)\cup S(g)\subseteq S(h)$. Since the definition of $h$ is symmetric in $f$ and $g$, it suffices to show the containment $S(f)\subseteq S(h)$. Assume that $x\notin S(h)$, and let $y=\prox_h(x)$. By the definition of $S(h)$, $h$ is differentiable at $y$. Since $h$ is polyhedral, this means that there is an open set $U\ni y$ and indices $i$ and $j$ such that
    \begin{equation} \label{gstrip_union.h_eq}
        h(z) = f_i(z) + g_j(z) + \sk{v_i}{u_j}
    \end{equation}
    for every $z\in U$. By the definition of $h$, for every $i'$,
    \begin{equation} \label{gstrip_union.h_bound}
        h(z) \ge f_{i'}(z) + g_j(z) + \sk{v_{i'}}{u_j}.
    \end{equation}
    Using that $\nabla f_i = v_i$ and combining \cref{gstrip_union.h_eq,gstrip_union.h_bound} yields
    \[f_i(z + u_j) = f_i(z) + \sk{v_i}{u_j} \ge
    f_{i'}(z) + \sk{v_{i'}}{u_j} = f_{i'}(z + u_j).\]
    Since $i'$ was arbitrary, it follows that $f|_{U + u_j} = f_i|_{U+u_j}$. Therefore, $f$ is differentiable at $y + u_j$ and $\nabla f(y + u_j)=v_i$.

    From \cref{gstrip_union.h_eq}, we can see that $\nabla h(y)=v_i + u_j$. By \cref{prox_subdiff},
    \[x = y + \nabla h(y) = y + v_i + u_j = y + u_j + \nabla f(y + u_j),\]
    therefore, $\prox_f(x)=y + u_j$. This shows that $x\notin S(f)$.
\end{proof}

Next, we will show that for compact sets, it is enough to consider coverings by a single generalized strip in the definition of $\gamma$:

\begin{Lemma} \label{gstrip_compact_cover}
    For a compact set $C\subseteq \R^d$,
    \[\gamma(C) = \inf\ha*{w(S)}{\text{$S\supseteq C$ is a generalized strip}}.\]
\end{Lemma}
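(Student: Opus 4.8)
The inequality $\gamma(C)\le \inf\{w(S):S\supseteq C\text{ a generalized strip}\}$ is immediate: a single generalized strip $S\supseteq C$, together with copies of $S(0)=\emptyset$, is a countable cover of $C$ by generalized strips, so $\gamma(C)\le w(S)$. This already settles the case $\gamma(C)=\infty$, since then every generalized strip containing $C$ has infinite width. So assume $\gamma(C)<\infty$; it then suffices to produce, for each $\varepsilon>0$, a single generalized strip $S\supseteq C$ with $w(S)<\gamma(C)+\varepsilon$.

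The plan is to start from an almost optimal countable cover of $C$ by generalized strips, pass to a finite subcover by compactness, and then fuse that finite family into one generalized strip via \cref{gstrip_union}. The step requiring care is that generalized strips are closed rather than open (as noted in the proof of \cref{gstrip_polyhedron}), so compactness does not directly yield a finite subcover — a point of $C$ might lie on the boundary of every covering strip containing it. I would get around this using \cref{gstrip_polyhedron}: since each generalized strip is a polyhedron, its boundary lies in finitely many hyperplanes, and a hyperplane thickens to a strip of arbitrarily small width. Concretely, fix generalized strips $S_1,S_2,\dots$ with $C\subseteq\bigcup_i S_i$ and $\sum_i w(S_i)<\gamma(C)+\varepsilon/3$. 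By \cref{gstrip_polyhedron} each $\partial S_i$ is contained in finitely many hyperplanes; enumerate all of these as $H_1,H_2,\dots$, pick $\rho_m>0$ with $\sum_m 2\rho_m<\varepsilon/3$, let $U_m$ be the open $\rho_m$-neighbourhood of $H_m$, and let $T_m=\overline{U_m}$ be the corresponding closed strip, which by \cref{strip_is_gstrip} is a generalized strip with $w(T_m)=2\rho_m$. Then $\{\Int S_i\}_i\cup\{U_m\}_m$ is an open cover of $C$: each $x\in C$ lies in some $S_i$, hence either in $\Int S_i$ or in $\partial S_i\subseteq\bigcup_m H_m\subseteq\bigcup_m U_m$.

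By compactness, $C$ is covered by finitely many of these open sets; replacing each by the generalized strip it came from, $C\subseteq G_1\cup\dots\cup G_N$, where the $G_l$ are generalized strips chosen among the $S_i$ and the $T_m$, with $\sum_l w(G_l)<\gamma(C)+2\varepsilon/3$. For each $l$ choose a polyhedral function $g_l$ with $S(g_l)=G_l$ and $\Lip g_l<\tfrac12 w(G_l)+\varepsilon/(6N)$, and apply \cref{gstrip_union} $N-1$ times to obtain a polyhedral $h$ with $S(h)\supseteq\bigcup_l S(g_l)=\bigcup_l G_l\supseteq C$ and $\Lip h\le\sum_l\Lip g_l<\tfrac12(\gamma(C)+2\varepsilon/3)+\varepsilon/6=\tfrac12\gamma(C)+\varepsilon/2$. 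Then $S=S(h)$ is a generalized strip containing $C$ with $w(S)\le 2\Lip h<\gamma(C)+\varepsilon$, and letting $\varepsilon\to 0$ finishes the proof. The only genuinely new ingredient is the observation that the boundaries of the covering strips, each lying in finitely many hyperplanes, are $\gamma$-negligible and can be absorbed into an honest open cover by fattening those hyperplanes; everything else is the definition of $\gamma$, \cref{gstrip_union,strip_is_gstrip}, and routine bookkeeping with the error terms.
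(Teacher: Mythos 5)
Your proof is correct and takes essentially the same route as the paper: both start from an almost-optimal countable cover, observe via \cref{gstrip_polyhedron} that the boundaries of generalized strips lie in countably many hyperplanes, fatten those hyperplanes into thin strips to obtain an honest open cover, extract a finite subcover by compactness, and then merge the finite family into a single generalized strip via \cref{gstrip_union}. Your aside about the case $\gamma(C)=\infty$ is moot — a compact $C$ fits inside a large ball, which fits inside a single (generalized) strip, so $\gamma(C)$ is automatically finite — but this does not affect the argument.
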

\begin{proof}
    The inequality $\gamma(C) \le \inf w(S)$ is trivial.

    For the other direction, fix an $\varepsilon > 0$ and take generalized strips $S_1, S_2 \dots$ covering $C$ such that $\sum_{i=1}^\infty w(S_i) \le \gamma(C) + \varepsilon$. It follows from \cref{gstrip_polyhedron} that there is a sequence $H_1, H_2, \dots$ of hyperplanes covering $\bigcup_{i=1}^\infty \partial S_i$. Let $S'_j$ be a strip of width $\varepsilon/2^j$ containing $H_j$ in its interior. By \cref{strip_is_gstrip}, $S'_j$ is a generalized strip, furthermore,
    \begin{equation} \label{gstrip_compact_cover.w_bound}
        \sum_{i=1}^\infty w(S_i) + \sum_{j=1}^\infty w(S'_j) =
        \sum_{i=1}^\infty w(S_i) + \sum_{j=1}^\infty \frac{\varepsilon}{2^j} \le
        \gamma(C) + 2\varepsilon.
    \end{equation}
    Notice that
    \[C\subseteq \bigcup_{i=1}^\infty (\Int S_i \cup \partial S_i) \subseteq
    \bigcup_{i=1}^\infty \Int S_i \cup \bigcup_{j=1}^\infty H_i\subseteq
    \bigcup_{i=1}^\infty \Int S_i \cup \bigcup_{j=1}^\infty \Int S'_j.\]
    Using the compactness of $C$, we can choose generalized strips $\tilde S_1, \dots, \tilde S_n$ from $\{S_i\}_i$ and $\{S_j'\}_j$ such that $C\subseteq \bigcup_{k=1}^{n} \Int \tilde S_k$. Furthermore, \cref{gstrip_compact_cover.w_bound} implies that $\sum_{k=1}^n w(\tilde S_k) \le \gamma(C)+2\varepsilon$.

    Choose $f_1, \dots, f_n$ such that $\tilde S_k=S(f_k)$ and $2\sum_{k=1}^n \Lip f_k \le \gamma(C)+3\varepsilon $. By repeatedly applying \cref{gstrip_union}, we obtain an $f$ with the properties $\bigcup_{k=1}^{n} \tilde S_k\subseteq S(f)$ and $\Lip f \le \sum_{k=1}^n \Lip f_k$. We can see that $S = S(f)$ satisfies $C\subseteq S$ and $w(S) \le \gamma(C) + 3\varepsilon$. Since $\varepsilon$ was arbitrary, this completes the proof.
\end{proof}

\begin{Lemma} \label{gstrip_contr}
    Let $A\subseteq \R^d$ be bounded, and suppose that $\partial A\subseteq S(f)$ for some polyhedral function $f$. Then $\prox_f(A)$ is a polyhedron.
\end{Lemma}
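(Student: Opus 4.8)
The plan is to show directly that the boundary of $\prox_f(A)$ is contained in the finite union of hyperplanes
\[\mathcal H = \bigcup_{i\ne j}\{x\in\R^d : f_i(x)=f_j(x)\}\]
coming from a representation $f=\max_i f_i$ into affine pieces. I may assume the $f_i$ are pairwise distinct functions, so that each $\{f_i=f_j\}$ with $i\ne j$ is a genuine hyperplane (or empty); write $v_i=\nabla f_i$.

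First I would describe $\prox_f$ outside $S(f)$. For each $i$ put $D_i=\{x : f_i(x)>f_j(x)\text{ for all }j\ne i\}$; this is an open convex polyhedron, the $D_i$ are exactly the sets $\{x : I(x)=\{i\}\}$ where $I(x)=\{j : f_j(x)=f(x)\}$, and together they cover the set of points of differentiability of $f$ (whose complement $Z$ is closed and contained in $\mathcal H$). Using \cref{prox_subdiff}, and noting $\partial f(y)=\{v_i\}$ for $y\in D_i$, one checks that $\prox_f$ carries $D_i+v_i$ onto $D_i$ by the translation $x\mapsto x-v_i$, that $\R^d\setminus S(f)=\bigcup_i(D_i+v_i)$, and that this union is disjoint — if $x$ lay in $(D_i+v_i)\cap(D_j+v_j)$ then $v_i=v_j$, which forces one of $D_i,D_j$ to be empty since the $f_i$ are distinct. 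Thus $\R^d\setminus S(f)$ is a finite disjoint union of relatively clopen convex (hence connected) cells $D_i+v_i$.

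The decisive observation is then that, since $\partial A\subseteq S(f)$, every point of $\R^d\setminus S(f)$ lies in $\Int A$ or in $\Int(\R^d\setminus A)$, so $A\cap(\R^d\setminus S(f))$ is relatively clopen in $\R^d\setminus S(f)$; intersecting with a connected cell $D_i+v_i$ shows that $A$ either contains it entirely or misses it entirely. Hence $A\setminus S(f)=\bigcup_{i\in I}(D_i+v_i)$ for some index set $I$, so $\prox_f(A\setminus S(f))=\bigcup_{i\in I}D_i=:G$, a finite union of open polyhedra with $\partial G\subseteq\bigcup_i\partial D_i\subseteq\mathcal H$. Separately, $\prox_f(A\cap S(f))\subseteq\prox_f(S(f))$, which by the definition of $S(f)$ is contained in $Z\subseteq\mathcal H$.

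Finally, writing $E=\prox_f(A)=G\cup\prox_f(A\cap S(f))$, note that $G\cap Z=\emptyset$, so on the open set $\R^d\setminus Z$ the set $E$ coincides with $G$. Hence for $y\notin\mathcal H$ (so $y\notin Z$), $y$ is a boundary point of $E$ if and only if it is a boundary point of $G$, which is impossible since $\partial G\subseteq\mathcal H$. Therefore $\partial E\subseteq\mathcal H$, a finite union of hyperplanes, so $\prox_f(A)$ is a polyhedron. I expect the main work to be in the first two steps — pinning down that $\R^d\setminus S(f)$ breaks into the finitely many convex cells $D_i+v_i$ on which $\prox_f$ is a translation — since the clopen/connectedness argument and the boundary estimate both hinge on it; the boundary estimate itself is then just the remark that enlarging $G$ by a subset of the lower-dimensional set $Z$ cannot create new boundary points away from $Z$.
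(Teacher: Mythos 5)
Your proof is correct, but it takes a genuinely different route from the paper's. The paper argues locally by contradiction: it fixes a boundary point $y$ of $\prox_f(A)$ where $f$ is supposedly differentiable, uses compactness of $\overline A$ (this is where boundedness enters) to find a preimage $z\in\overline A$ with $\prox_f(z)=y$, deduces from $z\notin S(f)$ and $\partial A\subseteq S(f)$ that $z\in\Int A$, and then shows $\prox_f$ maps a neighborhood of $z$ onto a neighborhood of $y$ inside $\prox_f(A)$, contradicting $y\in\partial(\prox_f(A))$. You instead give a global structural description: you decompose $\R^d\setminus S(f)$ into finitely many disjoint open convex cells $D_i+v_i$ on which $\prox_f$ is translation, observe that the hypothesis $\partial A\subseteq S(f)$ forces $A$ to be all-or-nothing on each (connected) cell, and conclude that $\prox_f(A)$ equals a finite union of the open cells $D_i$ together with a subset of the non-differentiability locus $Z$. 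The boundary estimate is then immediate. Your route is a bit longer to set up but yields strictly more information (an explicit description of the image up to a subset of $Z$), and notably does not use the boundedness of $A$ at all, so it proves a slightly more general statement than the one in the paper. Both arguments hinge on the same two facts from \cref{prox_subdiff} — that $\prox_f$ is translation by $-v_i$ near differentiability points and that $S(f)$ is exactly the prox-preimage of $Z$ — but organize them differently.
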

\begin{proof}
    It is clear that the set of points where a polyhedral function $f$ is not differentiable can be covered by finitely many hyperplanes. Therefore, it is enough to show that for every $y\in \partial(\prox_f(A))$, $f$ is not differentiable at $y$. Assuming the contrary, there is a neighborhood $U\ni y$ such that $f|_U$ is affine with some gradient $v$.

    By the boundedness assumption, $\overline{A}$ is compact, so it follows from the continuity of $\prox_f$ that $y\in \overline{\prox_f(A)}=\prox_f\zj[\big]{\overline A}$. Choose a $z\in \overline A$ such that $y=\prox_f(z)$. We know that $\nabla f(y)=v$, so $z=y+v$ by \cref{prox_subdiff}. By the definition of $S(f)$, $z\notin S(f)$, and so $z\notin \partial A$, therefore $z\in \Int A$. We can also check that for $x\in U + v$, we have $x = (x - v) + \nabla f(x - v)$, so it follows from \cref{prox_subdiff} that $\prox_f(x) = x - v$. This shows that
    \[y\in U\cap (\Int A - v) = \prox_f((U + v) \cap \Int A)
    \subseteq \prox_f(A),\]
    hence $y\in \Int \prox_f(A)$, contradicting the assumption $y\in \partial(\prox_f(A))$.
\end{proof}

\begin{proof}[Proof of \cref{gstrip_main}]
    Let $A$ be a bounded set with $\gamma$-null boundary. The measurability of $A$ is clear from \cref{gstrip_null_is_null}. It suffices to show that $A$ is distance Kolmogorov, from which it follows by \cref{dist_to_measure} that $A$ is also measure Kolmogorov. By the boundedness assumption, $\partial A$ is compact. \Cref{gstrip_compact_cover} implies that for every $\varepsilon > 0$, there exists a generalized strip $S$ such that $\partial A\subseteq S$ and $w(S) < 2\varepsilon$. Choose an $\varepsilon$-Lipschitz polyhedral function $f$ such that $S=S(f)$. We will now check that $F=\prox_f$ satisfies the conditions for $A$ being distance Kolmogorov. We know that $F$ is 1-Lipschitz by \cref{prox_contr}. Furthermore, by \cref{prox_dist}, $|F(x)-x|\le \varepsilon$ for every $x\in \R^d$. Finally, it follows from \cref{gstrip_contr} that $F(A)$ is a polyhedron.
\end{proof}

\begin{Rem}
    To show that a bounded set with $\gamma$-null boundary is measure Kolmogorov, there is no need for \cref{dist_ext}. Recall that \cref{dist_to_measure} was shown by extending the 1-Lipschitz map defined on $A$ to $\R^d$, then applying \cref{dist_to_measure_ball} to the extended map. However, the construction in the proof of \cref{gstrip_main} already gives a 1-Lipschitz map $F$ such that $|F(x) - x|\le \varepsilon$ for every $x\in \R^d$, which means that the extension can be avoided.
\end{Rem}

\section{Examples of \texorpdfstring{$\gamma$}{γ}-null sets}
\label{sec:surface}

We will first show that the boundary of a convex set is $\gamma$-null.

\begin{Lemma} \label{convex_neighborhood_cover}
    Let $C\subseteq \R^d$ be a bounded convex set. For $r > 0$, let $C_r$ be the (open) $r$-neighborhood of $C$. Then $\gamma(C_r\setminus \Int C) \le 2r$.
\end{Lemma}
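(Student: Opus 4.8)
The plan is to produce, for every $\delta>0$, a \emph{single} generalized strip of width at most $2(r+\delta)$ that contains $C_r\setminus\Int C$; letting $\delta\to 0$ then yields $\gamma(C_r\setminus\Int C)\le 2r$. The generalized strip will be $S(f)$ for $f$ a suitably scaled polyhedral ``distance function'' to an inner polytope approximating $C$. Concretely, fix $\delta>0$ and choose a polytope $P$ with $P\subseteq C\subseteq P_\delta$ (a standard approximation; if $C$ has empty interior one works inside $\operatorname{aff} C$, and the cases $C=\emptyset$ or a single point are trivial). Write $P=\bigcap_{i=1}^m\{\ell_i\le 0\}$ with $\ell_i$ affine, $\abs{\nabla\ell_i}=1$, and no redundant constraint, so the $n_i:=\nabla\ell_i$ are distinct. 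Set
\[
    g:=\max(\ell_1,\dots,\ell_m,0),\qquad f:=(r+\delta)\,g,
\]
a polyhedral function with $\Lip f=r+\delta$, so that $S(f)$ is a generalized strip with $w(S(f))\le 2\Lip f=2(r+\delta)$. One checks $\{g=0\}=P$, and the set where $g$ fails to be differentiable is $\R^d\setminus\bigl(\Int P\cup\bigcup_i U_i\bigr)$, where $U_i=\{y:\ell_i(y)>0,\ \ell_i(y)>\ell_j(y)\ \forall j\ne i\}$ is the open cell on which $g=\ell_i$; moreover $\partial g=\{0\}$ on $\Int P$ and $\partial g=\{n_i\}$ on $U_i$.

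\textbf{The key claim.} I claim: if $x\notin\Int P$ and $\operatorname{dist}(x,P)\le r+\delta$, then $\prox_f(x)$ is a point of non-differentiability of $g$, hence $x\in S(f)$. Put $y:=\prox_f(x)=\prox_{(r+\delta)g}(x)$, so by \cref{prox_subdiff} we have $x-y\in(r+\delta)\,\partial g(y)$. If $y\in\Int P$, then $\partial g(y)=\{0\}$ forces $x=y\in\Int P$, contradicting $x\notin\Int P$. If $y\in U_i$, then $x-y=(r+\delta)n_i$, so $\ell_i(x)=\ell_i(y)+(r+\delta)\abs{n_i}^2>r+\delta$ (using $\ell_i(y)>0$ and $\abs{n_i}=1$); since $P\subseteq\{\ell_i\le 0\}$ and $\ell_i$ is the signed distance to its zero hyperplane, this gives $\operatorname{dist}(x,P)\ge\ell_i(x)>r+\delta$, again a contradiction. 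Hence $y$ lies in neither $\Int P$ nor any $U_i$, i.e. $g$ is not differentiable at $y$, proving the claim.

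\textbf{Conclusion.} Every $x\in C_r\setminus\Int C$ satisfies the hypotheses of the claim: $x\notin\Int C\supseteq\Int P$, and $\operatorname{dist}(x,P)\le\operatorname{dist}(x,C)+\delta<r+\delta$ since $C\subseteq P_\delta$. Therefore $C_r\setminus\Int C\subseteq S(f)$, whence $\gamma(C_r\setminus\Int C)\le 2(r+\delta)$, and letting $\delta\to 0$ completes the proof. I expect the only genuinely delicate point to be the calibration of the scaling factor $r+\delta$ together with the case analysis in the claim — specifically, that ruling out $\prox_f(x)\in U_i$ uses precisely the bound $\operatorname{dist}(x,P)\le r+\delta$ and the normalization $\abs{\nabla\ell_i}=1$; by contrast, the existence of the inner polytope and the computations of $\partial g$ and $\Lip f$ are routine.
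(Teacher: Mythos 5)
Your proof is correct and is essentially the same as the paper's: both build the inner approximating polytope $P\subseteq C$ within $\delta$ of $C$, take the polyhedral function $\max(0,\max_i f_i)$ associated with its H-representation with constraint gradients normalized to have norm $r+\delta$, and show that $\prox_f$ of any point in $C_r\setminus\Int C$ lands on a non-differentiability point of $f$ via the same two-case analysis. The only cosmetic difference is that you phrase the contradiction in the $y\in U_i$ case through $\operatorname{dist}(x,P)\ge\ell_i(x)$, whereas the paper instead picks an explicit net point $z$ and runs the Lipschitz estimate $f_i(x)-f_i(z)\le|v_i||x-z|$; the two are equivalent.
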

\begin{proof}
    We will show that $\gamma(C_r\setminus \Int C)\le 2(r+\varepsilon)$ for every $\varepsilon > 0$. Choose a finite set $D\subseteq C$ such that for every $x\in C$, there is a point $y\in D$ such that $|x - y| \le \varepsilon$. The convex hull $P=\conv(D)\subseteq C$ is a polytope, therefore, it is a polyhedron in the sense that it is the intersection of finitely many half-spaces. Write $P=\ha{x}{f_i(x) \le 0\; \forall i}$, where the $f_i$ are finitely many affine functions. Let $v_i$ be the gradient of $f_i$. Without loss of generality, we may assume that $|v_i| = r+\varepsilon$ for every $i$. Let $f = \max(0, \max_i f_i)$ and $S = S(f)$. It is clear that $w(S) \le 2(r+\varepsilon)$.

    It remains to show that $C_r\setminus \Int C \subseteq S$. Let $x\in C_r\setminus \Int C$, and assume for contradiction that $x\notin S$. Then $f$ is differentiable at $y=\prox_f(x)$, so either $f=0$ or $f=f_i$ in a neighborhood of $y$. If $f=0$ in a neighborhood, then $y\in \Int P$, furthermore, it follows from \cref{prox_subdiff} that $x=y$, contradicting the assumption $x\notin \Int C$.

    If $f=f_i$ in a neighborhood of $y$, then $x=y + v_i$ and $f_i(y) \ge 0$. Since $x \in C_r$, there is a point $z_0\in C$ such that $|x - z_0| < r$. Choose a point $z\in D$ satisfying $|z_0 - z| \le \varepsilon$. By the triangle inequality, $|x - z| < r + \varepsilon = |v_i|$. Note that $z\in P$, hence $f_i(z) \le 0$. Using the fact that $f_i$ is $|v_i|$-Lipschitz, we can see that
    \[0 \le f_i(y) - f_i(z) = f_i(x) - f_i(z) - |v_i|^2 \le |v_i||x - z| - |v_i|^2 < 0,\]
    a contradiction.
\end{proof}

\begin{Thm} \label{convex_gstrip_null}
    Every convex hypersurface in $\R^d$ is $\gamma$-null.
\end{Thm}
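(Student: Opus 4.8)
The plan is to obtain this as an essentially immediate consequence of \cref{convex_neighborhood_cover}. A convex hypersurface is (a subset of) the boundary $\partial C$ of some convex set $C\subseteq \R^d$, so by the monotonicity of the outer measure $\gamma$ it suffices to show that $\gamma(\partial C)=0$.

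First I would handle the case when $C$ is bounded. The only observation needed is that $\partial C\subseteq C_r\setminus \Int C$ for every $r>0$: indeed $\partial C\subseteq \overline C$, and $\overline C\subseteq C_r$ since $C_r$ is the \emph{open} $r$-neighborhood of $C$, while $\partial C$ is disjoint from $\Int C$ by definition. Hence \cref{convex_neighborhood_cover} gives $\gamma(\partial C)\le \gamma(C_r\setminus \Int C)\le 2r$ for every $r>0$, and letting $r\to 0$ yields $\gamma(\partial C)=0$. Note that this argument needs no assumption on $\Int C$, so the degenerate case of a convex set lying in a proper affine subspace is covered automatically.

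For a possibly unbounded convex $C$, I would reduce to the bounded case via the $\sigma$-subadditivity of $\gamma$. Write $\partial C=\bigcup_{n=1}^\infty \zj[\big]{\partial C\cap \overline{B(0,n)}}$, and for each $n$ put $C_n=C\cap \overline{B(0,n+1)}$, which is a bounded convex set. A short point-set argument shows $\partial C\cap \overline{B(0,n)}\subseteq \partial C_n$: if $x\in \partial C$ with $|x|\le n$, then $B(x,1)\subseteq \overline{B(0,n+1)}$, so $C_n$ and $C$ have the same trace on $B(x,1)$ and therefore the same boundary points there, and in particular $x\in \partial C_n$. Thus $\gamma\zj[\big]{\partial C\cap \overline{B(0,n)}}\le \gamma(\partial C_n)=0$ by the bounded case, and summing over $n$ gives $\gamma(\partial C)=0$.

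I do not expect a genuine obstacle here: the substantive content has already been extracted in \cref{convex_neighborhood_cover}, and what remains is bookkeeping — checking that whatever precise notion of ``convex hypersurface'' is in force (a cap of $\partial C$, the graph of a convex function viewed as the boundary of its epigraph, etc.) embeds into the boundary of a convex set, and dealing cleanly with unboundedness and degeneracy. If anything, the mildly delicate point is the inclusion $\partial C\cap \overline{B(0,n)}\subseteq \partial C_n$, which hinges on $C_n$ agreeing with $C$ in a neighborhood of each such boundary point.
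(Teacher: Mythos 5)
Your proof is correct and follows essentially the same route as the paper's: both reduce to the bounded case via $\sigma$-subadditivity, embed the surface in $\partial C$ for a bounded convex $C$, observe $\partial C\subseteq C_r\setminus \Int C$, and invoke \cref{convex_neighborhood_cover} while letting $r\to 0$. You spell out the localization step $\partial C\cap \overline{B(0,n)}\subseteq \partial(C\cap \overline{B(0,n+1)})$ more explicitly than the paper, which simply states it suffices to argue locally, but the substance is the same.
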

\begin{proof}
    By the $\sigma$-subadditivity of $\gamma$, it is enough to show that a convex hypersurface is locally $\gamma$-null. A bounded convex surface can be obtained as a subset of $\partial C$ for some bounded convex set $C$. For every $r > 0$, $\partial C \subseteq C_r \setminus \Int C$, so we are done by \cref{convex_neighborhood_cover}.
\end{proof}

Next, we will show that regular $\mathcal C^2$ hypersurfaces are $\gamma$-null. We will need the following property of the subdifferential:

\begin{Prop}[{\cite[Proposition 16.9]{ConvAnal}}] \label{subdiff_prod}
    Let $f : \R^n \to \R$ and $g : \R^m \to \R$, and let $h : \R^n \times \R^m \to \R$, $h(x, y) = f(x) + g(y)$. Then $h$ has subdifferential
    \[\partial h(x, y) = \partial f(x) \times \partial g(y).\]
\end{Prop}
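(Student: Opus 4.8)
The plan is to prove both inclusions directly from the definition of the subdifferential, using nothing beyond the fact that the inner product on $\R^n \times \R^m$ splits as $\sk{(v_1, v_2)}{(x, y)} = \sk{v_1}{x} + \sk{v_2}{y}$ under the identification $\R^n \times \R^m \cong \R^{n+m}$.

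For the inclusion $\partial f(x) \times \partial g(y) \subseteq \partial h(x, y)$, I would take $v_1 \in \partial f(x)$ and $v_2 \in \partial g(y)$, write the defining inequalities $f(x') \ge f(x) + \sk{v_1}{x' - x}$ and $g(y') \ge g(y) + \sk{v_2}{y' - y}$ for arbitrary $x' \in \R^n$, $y' \in \R^m$, and add them. The left-hand side becomes $h(x', y')$ and the right-hand side becomes $h(x, y) + \sk{(v_1, v_2)}{(x', y') - (x, y)}$, so $(v_1, v_2) \in \partial h(x, y)$. For the reverse inclusion, I would start from $(v_1, v_2) \in \partial h(x, y)$, so that $h(x', y') \ge h(x, y) + \sk{(v_1, v_2)}{(x', y') - (x, y)}$ for all $(x', y')$; specializing to $y' = y$ cancels the $g$-terms and leaves $f(x') \ge f(x) + \sk{v_1}{x' - x}$ for all $x'$, i.e. $v_1 \in \partial f(x)$, and symmetrically $x' = x$ gives $v_2 \in \partial g(y)$. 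Hence $(v_1, v_2) \in \partial f(x) \times \partial g(y)$.

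There is no genuine obstacle here: each direction is a one-line computation, and the only point needing (minimal) care is the bookkeeping of the splitting of the pairing used above. I note that the argument uses no convexity, so the identity in fact holds for arbitrary $f$ and $g$; in the applications of this excerpt $f$ and $g$ will be polyhedral, which also guarantees that the relevant subdifferentials are nonempty so that the statement is not vacuous.
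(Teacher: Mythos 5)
Your proof is correct; both inclusions follow cleanly from the definition of the subdifferential together with the splitting of the inner product on $\R^n \times \R^m$, exactly as you write. The paper itself gives no proof here — it simply cites \cite[Proposition 16.9]{ConvAnal} — so there is no authorial argument to compare against, but your direct verification is the standard one and is essentially what the cited reference does. Your closing remark that convexity is nowhere used is accurate with the definition of subdifferential given in the paper, and it is worth noting that the same observation explains why one is free to restrict to polyhedral $f$ and $g$ in the application (\cref{dc_gstrip_cover}), where nonemptiness of the subdifferentials is automatic.
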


\begin{Lemma} \label{dc_gstrip_cover}
    Let $g : \R^{d-1} \to \R$ and $h : \R^{d-1} \to \R$ be $1/3$-Lipschitz polyhedral functions, and let $f = g - h$. Then for every $\varepsilon > 0$, there exists a generalized strip $S\subseteq \R^{d}$ such that $w(S) \le 8\varepsilon$ and
    \[|y - f(x)| \le \varepsilon \implies (x, y)\in S\]
    for every $x\in \R^{d-1}$ and $y\in \R$.
\end{Lemma}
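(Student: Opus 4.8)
The plan is to produce a single polyhedral function $\Phi$ on $\R^d$ and take $S=S(\Phi)$. The guiding observation is that $\abs{y-f(x)}$ already has the ridge $\{y=f(x)\}$ that we want, but it is not convex: $f=g-h$ is only a difference of convex functions, the non-convexity of $y-f(x)=y-g(x)+h(x)$ coming from the $-g$ term and that of $f(x)-y=g(x)-h(x)-y$ from the $-h$ term. Both are cured by adding $g+h$. Concretely, I would fix $\varepsilon'=2\varepsilon$ and set
\[
\Phi(x,y)=\varepsilon'\abs{y-f(x)}+\varepsilon'\bigl(g(x)+h(x)\bigr)=\max\bigl(\varepsilon' y+2\varepsilon' h(x),\ 2\varepsilon' g(x)-\varepsilon' y\bigr),
\]
which is polyhedral because $g$ and $h$ are. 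After discarding redundant affine pieces we may assume every affine piece of $g$ and of $h$ is $1/3$-Lipschitz, so every affine piece of $\Phi$ has gradient of norm at most $\varepsilon'\sqrt{4/9+1}=\varepsilon'\sqrt{13}/3$; hence $\Lip\Phi\le\varepsilon'\sqrt{13}/3$ and $w(S(\Phi))\le 2\Lip\Phi\le 4\sqrt{13}\,\varepsilon/3<8\varepsilon$.

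The remaining task is to show $\abs{y-f(x)}\le\varepsilon\implies(x,y)\in S(\Phi)$, which I would prove by contraposition. Write $\Phi=\max(P_1,P_2)$ with $P_1(x,y)=\varepsilon' y+2\varepsilon' h(x)$ and $P_2(x,y)=2\varepsilon' g(x)-\varepsilon' y$. By the product rule for subdifferentials (\cref{subdiff_prod}) the last coordinate of every element of $\partial P_1$ equals $\varepsilon'$ and that of $\partial P_2$ equals $-\varepsilon'$, so at any point where both $P_1$ and $P_2$ attain the maximum, $\partial\Phi$ is not a singleton and $\Phi$ is not differentiable there. Consequently, if $\Phi$ is differentiable at $q=\prox_\Phi(x,y)$, then exactly one of $P_1,P_2$ is active at $q$ and $\Phi$ agrees with it near $q$. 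Say it is $P_1$; then $P_1(q)>P_2(q)$ forces $q_y>f(q_x)$, and differentiability of $\Phi=P_1$ at $q$ forces $h$ to be differentiable at $q_x$ with $\nabla\Phi(q)=(2\varepsilon'\nabla h(q_x),\varepsilon')$. By \cref{prox_subdiff}, $(x,y)=q+\nabla\Phi(q)$, so $y=q_y+\varepsilon'>f(q_x)+\varepsilon'$ while $\abs{x-q_x}=2\varepsilon'\abs{\nabla h(q_x)}\le 2\varepsilon'/3$; since $f$ is $(2/3)$-Lipschitz this gives $y-f(x)>\varepsilon'-(2/3)(2\varepsilon'/3)=5\varepsilon'/9>\varepsilon$. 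The case where $P_2$ is active is symmetric and yields $f(x)-y>\varepsilon$, so in either case $\abs{y-f(x)}>\varepsilon$, which is the contrapositive we wanted.

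I expect the last estimate to be the crux, and it is exactly where the constant $1/3$ is used: $\prox_\Phi$ displaces the $x$-coordinate by up to $\Lip\Phi=O(\varepsilon)$, and one must ensure that this horizontal wobble changes $f$ by strictly less than the vertical margin $\varepsilon'$ produced by the $\abs{y-f(x)}$ term, while simultaneously keeping $w(S(\Phi))\le 8\varepsilon$; the Lipschitz bounds on $g$ and $h$ (hence on $f$) make both constraints compatible. The remaining ingredients — that $\Phi$ is polyhedral, the gradient bound on its pieces, and the fact that $\max(P_1,P_2)$ is non-differentiable wherever both pieces attain the maximum — are routine and I would only sketch them.
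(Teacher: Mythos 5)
Your proposal is correct and, despite the different presentation, is essentially the same proof as the paper's. Your function $\Phi(x,y)=\max(\varepsilon' y + 2\varepsilon' h(x),\ 2\varepsilon' g(x)-\varepsilon' y)$ with $\varepsilon'=2\varepsilon$ is literally the paper's $F(x,y)=2\varepsilon\max(2g(x)-y,\ y+2h(x))$, and the contraposition argument (active piece $\Rightarrow$ bound on the horizontal displacement of $\prox$ $\Rightarrow$ $|y-f(x)|>\varepsilon$) matches the paper's contradiction argument step for step; you only arrange the final Lipschitz estimate through $f$ being $2/3$-Lipschitz rather than through $g$ and $h$ separately, and you get the cleaner numerical bound $w(S)\le 4\sqrt{13}\,\varepsilon/3$ versus the paper's cruder $8\varepsilon$. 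The one thing you add that the paper does not make explicit is the motivating identity $\Phi=\varepsilon'|y-f(x)|+\varepsilon'(g+h)$, which neatly explains where the construction comes from (convexify the DC function by adding $g+h$); that is a genuine improvement in exposition, though not in substance.
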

\begin{proof}
    Consider the polyhedral function
    \[F(x, y) = 2\varepsilon\max \zj*{2g(x) - y, y + 2h(x)}.\]
    We will show that $S = S(F)$ satisfies the conditions. It is clear that $F$ is $4\varepsilon$-Lipschitz, therefore $w(S)\le 8\varepsilon$.

    Assume that $|y - f(x)| \le \varepsilon$, and let $(\tilde{x}, \tilde{y}) = \prox_F(x, y)$. We have to show that $F$ is not differentiable at $(\tilde{x}, \tilde{y})$. Note that since $\partial_y(2g(x) - y) \ne \partial_y(y+2h(x))$, it is enough to prove the equality $2g(\tilde x) - \tilde y = \tilde y + 2h(\tilde x)$. Assume for contradiction that this equality does not hold.
    
    First, consider the case when $2g(\tilde x) - \tilde y > \tilde y + 2h(\tilde x)$. By continuity, this inequality also holds in a neighborhood, therefore $F(x', y')= 2\varepsilon(2g(x') - y')$ in a neighborhood. Using \cref{subdiff_prod}, we can see that
    \[\partial F(\tilde x, \tilde y) =
    \partial(4\varepsilon g)(\tilde x) \times \partial (- 2 \varepsilon \id)(\tilde y) =
    4\varepsilon \partial g(\tilde x) \times \{-2\varepsilon\}.\]
    If follows from \cref{prox_subdiff} that $x - \tilde x \in 4\varepsilon \partial g(\tilde x)$ and $y = \tilde y - 2\varepsilon$. Since $g$ is $1/3$-Lipschitz, we can see that $|x - \tilde x| \le 4\varepsilon/3\le 3\varepsilon/2$. Note that that $g$ and $h$ are both $1/3$-Lipschitz, therefore $g(x) \ge g(\tilde x) - \varepsilon/2$ and $h(x) \le h(\tilde x) + \varepsilon/2$. This implies that
    \[f(x) - y \ge \zj*{g(\tilde x) - \frac{\varepsilon}2} - \zj*{h(\tilde x) + \frac{\varepsilon}{2}} - (\tilde y - 2\varepsilon) =
    \frac{(2g(\tilde x) - \tilde y) - (\tilde y + 2 h(\tilde x))}2 + \varepsilon > \varepsilon,\]
    contradicting the initial assumption.

    Now assume that $2g(\tilde x) - \tilde y < \tilde y + 2h(\tilde x)$. A similar calculation shows that $|x - \tilde x|\le 3\varepsilon/2$ and $y=\tilde y + 2\varepsilon$. It follows that
    \[y - f(x) \ge (\tilde y + 2\varepsilon) - \zj*{g(\tilde x) + \frac{\varepsilon}2} + \zj*{h(\tilde x) - \frac{\varepsilon}{2}} =
    \frac{(\tilde y + 2 h(\tilde x)) - (2g(\tilde x) - \tilde y)}2 + \varepsilon > \varepsilon,\]
    which is, again, a contradiction.
\end{proof}

\begin{Lemma} \label{convex_approx}
    Let $U\subseteq \R^d$ be a bounded convex open set, and let $f : U \to \R$ be an $L$-Lipschitz convex function. Then for every $\varepsilon > 0$, there exists an $L$-Lipschitz polyhedral function $g : \R^d \to \R$ such that $|f(x) - g(x)| \le \varepsilon$ for every $x\in U$.
\end{Lemma}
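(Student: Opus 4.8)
The plan is to write $f$ as a supremum of its $L$-Lipschitz affine minorants and then extract a finite subfamily by a compactness/net argument. We may assume $L > 0$, since otherwise $f$ is constant and the statement is trivial.

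First I would record the subgradient inequality in the form we need. Since $f$ is convex on the convex open set $U$, its subdifferential $\partial f(x_0)$ is nonempty at every $x_0\in U$ (as noted in the text following \cref{prox_subdiff}), and for $v\in\partial f(x_0)$ the inequality $f(y)\ge f(x_0)+\sk{v}{y-x_0}$ holds for \emph{every} $y\in U$ (not merely near $x_0$): applying the local inequality at the points $x_0+t(y-x_0)\in U$ for small $t>0$ and using convexity of $f$ along this segment, one rearranges to the global inequality. Moreover, since $f$ is $L$-Lipschitz, every $v\in\partial f(x_0)$ satisfies $|v|\le L$.

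Next I would choose a finite net inside $U$. As $\overline U$ is compact and $U$ is dense in $\overline U$, for $\delta = \varepsilon/(2L)$ we can pick a finite set $\{x_1,\dots,x_n\}\subseteq U$ that is $\delta$-dense in $U$ (take a finite $(\delta/2)$-net of $\overline U$ and push each of its points into $U$ by at most $\delta/2$). For each $i$ pick $v_i\in\partial f(x_i)$ and set $\ell_i(x) = f(x_i)+\sk{v_i}{x-x_i}$, an affine function on $\R^d$ with gradient $v_i$, $|v_i|\le L$. Define
\[
g = \max_{1\le i\le n}\ell_i.
\]
Then $g$ is polyhedral and $\Lip g = \max_i|v_i|\le L$.

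Finally I would check the two-sided estimate on $U$. By the global subgradient inequality, $\ell_i\le f$ on $U$ for every $i$, hence $g\le f$ on $U$. Conversely, given $x\in U$, choose $i$ with $|x-x_i|\le\delta$; then
\[
g(x)\ge\ell_i(x) = f(x_i)+\sk{v_i}{x-x_i}\ge f(x_i)-L\delta\ge f(x)-2L\delta = f(x)-\varepsilon,
\]
where the last inequality uses that $f$ is $L$-Lipschitz. Combining the two bounds gives $|f(x)-g(x)|\le\varepsilon$ for all $x\in U$, as required.

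There is no real obstacle here; the only points needing care are that the net must be taken inside $U$ so that the subdifferentials are nonempty, and that the subgradient inequality is used globally on the convex set $U$ rather than just locally.
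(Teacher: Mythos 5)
Your proposal is correct and follows essentially the same route as the paper's proof: take a finite $\varepsilon/(2L)$-net in $U$, build affine minorants from subgradients at the net points, and take the maximum. The only cosmetic difference is that you lower-bound $g(x)$ directly via $\ell_i(x) \ge f(x_i) - L\delta$, while the paper upper-bounds $f(x)$ via $g(x_i) + L|x-x_i| \le g(x) + 2L|x-x_i|$; both chains give the same estimate.
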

\begin{proof}
    Let $L$ be the Lipschitz constant of $f$. We can choose a finite set of points $\{x_1, \dots, x_n\}\subseteq U$ such that for every $x\in U$, there is an $i$ such that $|x - x_i| \le \varepsilon/2L$. For each $x_i$, choose some $v_i\in \partial f(x_i)$. Since $U$ is open, it is easy to see that $|v_i|\le L$. Now take
    \[g(x) = \max_i \zj[\big]{\sk{v_i}{x - x_i} + f(x_i)}.\]
    It is clear from the definition of subdifferential that $g \le f$. Furthermore, $g$ is $L$-Lipschitz and $g(x_i) = f(x_i)$ for every $i$.

    Let $x\in U$, and choose an $i$ such that $|x - x_i| \le \varepsilon/2L$. We can check that
    \[f(x) \le f(x_i) + L|x - x_i| = g(x_i) + L|x - x_i| \le g(x) + 2L|x - x_i| \le g(x) + \varepsilon.\]
    This shows that $|f(x) - g(x)|\le \varepsilon$.
\end{proof}

We will also need the following theorem of Aleksandrov \cite{Alexandrov}. For completeness, we include the proof, as we use a slightly different formulation:
\begin{Prop} \label{convex_diff}
    Let $U\subseteq \R^d$ be a convex open set, and let $f : U \to \R$ be differentiable. Suppose that $\nabla f$ is $M$-Lipschitz. Then the map $g(x) = f(x) + M|x|^2/2$ is convex.
\end{Prop}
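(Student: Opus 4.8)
\textit{Proof proposal.} The plan is to use the first-order characterization of convexity for a differentiable function on a convex open set: $g$ is convex on $U$ if and only if its gradient is a monotone operator, that is, $\sk{\nabla g(x) - \nabla g(y)}{x - y} \ge 0$ for all $x, y \in U$ (this is standard, see \cite{ConvAnal}). Note that $g$ is differentiable on $U$, being the sum of $f$ and the smooth quadratic $x\mapsto M|x|^2/2$, so this criterion applies. Hence it suffices to verify the monotonicity inequality.

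Since $\nabla\bigl(|x|^2/2\bigr) = x$, we have $\nabla g(x) = \nabla f(x) + Mx$, and therefore
\[\sk{\nabla g(x) - \nabla g(y)}{x - y} = \sk{\nabla f(x) - \nabla f(y)}{x - y} + M\,|x - y|^2.\]
By the Cauchy--Schwarz inequality together with the hypothesis that $\nabla f$ is $M$-Lipschitz, the first term on the right is at least $-\,|\nabla f(x) - \nabla f(y)|\,|x - y| \ge -M\,|x - y|^2$. Adding $M|x-y|^2$ shows the left-hand side is nonnegative, so $\nabla g$ is monotone and $g$ is convex.

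If one prefers to sidestep the gradient-monotonicity criterion, the same computation can be done by hand: restricting $g$ to a segment $[x,y]\subseteq U$ reduces the claim to the one-dimensional case, or, more directly, one integrates along segments to obtain $f(y) - f(x) - \sk{\nabla f(x)}{y - x} = \int_0^1 \sk{\nabla f(x + t(y - x)) - \nabla f(x)}{y - x}\,dt \ge -\tfrac{M}{2}\,|y - x|^2$, and combining this with the elementary identity $\tfrac{M}{2}|y|^2 - \tfrac{M}{2}|x|^2 - M\sk{x}{y - x} = \tfrac{M}{2}|y - x|^2$ yields exactly $g(y) \ge g(x) + \sk{\nabla g(x)}{y - x}$, i.e.\ $g$ lies above each of its tangent hyperplanes. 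I do not expect any genuine obstacle; the only thing to keep in mind is that $f$ is merely $\mathcal C^{1,1}$ (its gradient is Lipschitz, but $f$ need not be twice differentiable), so one must argue through these first-order conditions rather than through the—only formally available—Hessian $\nabla^2 f + M\,\id$.
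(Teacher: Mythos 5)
Your proof is correct and takes essentially the same approach as the paper: the paper restricts $g$ to lines and shows the directional derivative is non-decreasing, which is precisely the gradient-monotonicity criterion you invoke, and the key estimate (Cauchy--Schwarz combined with the $M$-Lipschitz bound on $\nabla f$) is identical in both. Your phrasing is slightly more coordinate-free, but there is no substantive difference.
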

\begin{proof}
    Let $x\in U$, and consider the line $\ell(t) = x + t v$ for some $v\in \R^d$. We have to show that $g\circ \ell$ is convex, for which it suffices to show that $(g\circ \ell)'$ is non-decreasing. A simple calculation shows that
    \[(g \circ \ell)'(t) = M \sk{\ell(t)}{v} + \sk{\nabla f(\ell(t))}{v}.\]
    For $t < s$, we can see that
    \begin{align*}
        (g\circ \ell)'(s) - (g\circ \ell)'(t) &=
        M \sk{\ell(s) - \ell(t)}{v} + \sk{\nabla f(\ell(s)) - \nabla f(\ell(t))}{v} \\&\ge
        M \sk{(s - t) v}{v} - |\nabla f(\ell(s)) - \nabla f(\ell(t))| |v| \\&\ge
        M (s - t)|v|^2 - M|\ell(s) - \ell(t)| |v| = 0,
    \end{align*}
    which shows that $(g \circ \ell)'$ is indeed non-decreasing.
\end{proof}

\begin{Thm} \label{surface_gstrip_null}
    Let $U\subseteq \R^{d-1}$ be an open set, and suppose that $F \in \mathcal C^2(U, \R^d)$ is a regular hypersurface, that is, $F'$ has rank $d-1$ everywhere. Then $\gamma(F(U)) = 0$.
\end{Thm}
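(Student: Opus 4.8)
The plan is to use the $\sigma$-subadditivity of $\gamma$ to reduce to a local statement, write the surface locally as the graph of a $\mathcal{C}^2$ function, decompose that function as a difference of two convex $\tfrac13$-Lipschitz functions via Aleksandrov's theorem (\cref{convex_diff}), approximate the two convex functions by polyhedral ones (\cref{convex_approx}), and finally feed the resulting polyhedral functions into \cref{dc_gstrip_cover} to obtain a thin generalized strip covering a neighborhood of the graph, hence the surface piece. A preliminary observation is that $\gamma$ is invariant under isometries of $\R^d$: if $R$ is orthogonal, then $f\circ R^{-1}$ is polyhedral with $\Lip(f\circ R^{-1})=\Lip f$ and $S(f\circ R^{-1})=R(S(f))$ (using $\prox_{f\circ R^{-1}}(x)=R\,\prox_f(R^{-1}x)$), and translations behave similarly; so orthogonal changes of coordinates do not affect $\gamma$. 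Since $U$ is second countable, it then suffices to show that every $u_0\in U$ has a neighborhood $W$ with $\gamma(F(W))=0$. Fixing $u_0$ and choosing an orthogonal change of coordinates in $\R^d$ that aligns the tangent plane at $F(u_0)$ with $\R^{d-1}\times\{0\}$, the map $\pi\circ F$ (where $\pi$ projects onto the first $d-1$ coordinates) is a local $\mathcal{C}^2$-diffeomorphism near $u_0$ by the inverse function theorem and regularity of $F$; hence there is a bounded open ball $B=B(x_0,\rho)\subseteq\R^{d-1}$ and a $\mathcal{C}^2$ function $\phi:B\to\R$ with $\nabla\phi(x_0)=0$ and, after shrinking $W$, $F(W)\subseteq\{(x,\phi(x)):x\in B\}$.

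Next I would shrink $\rho$ to bring the relevant Lipschitz constants down: since $\nabla\phi$ is continuous with $\nabla\phi(x_0)=0$, we may assume $\phi|_B$ is $\tfrac16$-Lipschitz; and since $\phi\in\mathcal{C}^2$, we may also assume $M:=\sup_B\|D^2\phi\|<\infty$ with $M\rho\le\tfrac16$. Then by \cref{convex_diff} (applied after translating by $x_0$) the function $g_0(x)=\phi(x)+\tfrac M2|x-x_0|^2$ is convex on $B$, as is $h_0(x)=\tfrac M2|x-x_0|^2$; both are $\tfrac13$-Lipschitz on $B$ (bounds $\tfrac16+M\rho$ and $M\rho$ respectively), and $\phi=g_0-h_0$ on $B$. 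Given $\varepsilon>0$, I would apply \cref{convex_approx} on the bounded convex open set $B$ to get $\tfrac13$-Lipschitz polyhedral functions $g,h:\R^{d-1}\to\R$ with $|g-g_0|\le\tfrac\varepsilon2$ and $|h-h_0|\le\tfrac\varepsilon2$ on $B$; then $f:=g-h$ satisfies $|f-\phi|\le\varepsilon$ on $B$. Now \cref{dc_gstrip_cover} applied to $g$ and $h$ with this $\varepsilon$ produces a generalized strip $S\subseteq\R^d$ with $w(S)\le 8\varepsilon$ such that $(x,y)\in S$ whenever $|y-f(x)|\le\varepsilon$. In particular $(x,\phi(x))\in S$ for every $x\in B$, so $F(W)\subseteq S$ in the rotated coordinates; transporting $S$ back by the inverse rotation yields a generalized strip of the same width containing $F(W)$ in the original coordinates. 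Since $\varepsilon>0$ was arbitrary, $\gamma(F(W))=0$, and the local statement follows.

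The routine but essential part of the argument is the bookkeeping of constants in the second paragraph: the neighborhood must be shrunk enough that $\phi$ is $\tfrac16$-Lipschitz and $M\rho\le\tfrac16$ simultaneously, so that both $g_0$ and $h_0$, and therefore their polyhedral approximants, satisfy the $\tfrac13$-Lipschitz hypothesis of \cref{dc_gstrip_cover}. I expect the only genuinely conceptual step to be the reduction to a graph: one needs the regularity of $F$ together with the inverse function theorem to pass from the parametrization to a $\mathcal{C}^2$ height function $\phi$, and the choice of rotation is what forces $\nabla\phi(x_0)=0$, which in turn is what makes the Lipschitz constant of $\phi$ controllable after shrinking. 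The rest is a direct chain through \cref{convex_diff}, \cref{convex_approx} and \cref{dc_gstrip_cover}, combined with the isometry invariance of $\gamma$ and its $\sigma$-subadditivity.
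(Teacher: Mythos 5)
Your proposal is correct and follows essentially the same route as the paper: reduce to a local statement via $\sigma$-subadditivity and isometry-invariance, write the surface locally as the graph of a $\mathcal C^2$ function with vanishing gradient at the base point, use \cref{convex_diff} to split it as a difference $g_0-h_0$ of $\tfrac13$-Lipschitz convex functions on a small ball, approximate both by polyhedral functions via \cref{convex_approx}, and feed the result into \cref{dc_gstrip_cover}. The only differences are cosmetic (centering the quadratic term at $x_0$ instead of translating to the origin, and slightly different $\varepsilon$-bookkeeping), and you helpfully make explicit the isometry-invariance of $\gamma$ and the inverse-function-theorem step that the paper states without proof.
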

\begin{proof}
    It is a well-known that a regular $\mathcal C^2$ hypersurface is locally the graph of a $\mathcal C^2$ function in an appropriate coordinate system. Formally, for every $x\in U$, there exist open sets $x\in V\subseteq U$ and $0\in W\subseteq \R^{d-1}$, a diffeomorphism $\varphi : W \to V$, an orthogonal map $A : \R^d \to \R^d$ and a function $f \in \mathcal C^2(W, \R)$ such that $(F \circ \varphi)(x) = A \circ (x, f(x))$ for $x\in W$. Furthermore, $f$ can be chosen such that $\nabla f(0) = 0$.

    It is clear from the definitions that $\gamma$-null sets are preserved under isometries of $\R^d$. Therefore, we may assume that $A$ is the identity. By the $\sigma$-subadditivity of $\gamma$, it is enough to show that the theorem holds locally. Note that $\nabla f$ is $\mathcal C^1$, so it is locally Lipschitz. Without loss of generality, we may assume that $W$ is bounded and convex, furthermore, $\nabla f$ is $M$-Lipschitz on $W$ for some $M > 0$. Since $\nabla f(0) = 0$ and $\nabla f$ is continuous, we may also assume that $|\nabla f| \le 1/6$ on $W$. We will further assume that $W\subseteq B(0, 1/(6M))$.

    Let $h(x) = M |x|^2/2$. By \cref{convex_diff}, the function $g = f + h$ is convex. Since $|\nabla h(x)| = |M x|\le 1/6$ on $W$, it is clear that $g$ and $h$ are both $1/3$-Lipschitz. Using \cref{convex_approx}, we can choose $1/3$-Lipschitz polyhedral functions $\tilde g$ and $\tilde h$ such that $|g - \tilde g|\le \varepsilon$ and $|h - \tilde h| \le \varepsilon$ on $W$ for an arbitrary $\varepsilon > 0$. Let $\tilde f = \tilde g - \tilde h$. Note that $|f - \tilde f|\le 2\varepsilon$. By \cref{dc_gstrip_cover}, there is a generalized strip $S$ such that $\graph f \subseteq S$ and $w(S) \le 16\varepsilon$. Since $\varepsilon$ was arbitrary, this shows that $\gamma(\graph f) = 0$.
\end{proof}

Finally, we will give an example of a set that is $\gamma$-null but not tube-null. In particular, this shows that for $d = 2$, $\gamma$-null is strictly weaker than tube-null.

\begin{Prop} \label{pinned_dist_set_bound}
    For every $A \subseteq \R^d$,
    \[\gamma(A) \le 2\lambda^*\zj[\big]{\ha[\big]{|x|}{x \in A}},\]
    where $\lambda^*$ is the Lebesgue outer measure.
\end{Prop}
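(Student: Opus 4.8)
The plan is to reduce the statement to \cref{convex_neighborhood_cover} applied to balls centered at the origin. Write $R = \ha{|x|}{x\in A}\subseteq [0, \infty)$ for the pinned distance set; if $\lambda^*(R)=\infty$ there is nothing to prove, so assume it is finite. Fix $\varepsilon > 0$. Since the Lebesgue outer measure on the line may be computed using covers by half-open intervals, choose intervals $[a_i, b_i)\subseteq [0, \infty)$ ($i=1,2,\dots$, with $0\le a_i\le b_i$) such that $R\subseteq \bigcup_{i=1}^\infty [a_i, b_i)$ and $\sum_{i=1}^\infty (b_i - a_i) \le \lambda^*(R) + \varepsilon$. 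Since every $x\in A$ has $|x|\in R$, we get $A \subseteq \ha{x\in\R^d}{|x|\in R} \subseteq \bigcup_{i=1}^\infty \ha{x}{a_i\le |x| < b_i}$, so by the $\sigma$-subadditivity of $\gamma$ it suffices to bound the $\gamma$-measure of each annular shell $\ha{x}{a_i\le |x| < b_i}$ by $2(b_i - a_i)$ and sum.

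For the shell bound, apply \cref{convex_neighborhood_cover} with $C = \overline{B(0, a_i)}$ (a single point when $a_i = 0$) and $r = b_i - a_i$ (when $a_i = b_i$ the shell is empty and contributes nothing). This $C$ is bounded and convex; its open $r$-neighborhood $C_r$ is precisely the open ball $\{x : |x| < b_i\}$, and $\Int C$ is the open ball $\{x : |x| < a_i\}$, so $C_r\setminus\Int C = \ha{x}{a_i\le |x| < b_i}$. The lemma therefore gives exactly $\gamma\zj[\big]{\ha{x}{a_i\le |x| < b_i}} \le 2(b_i - a_i)$. Summing over $i$ and using subadditivity yields $\gamma(A)\le \sum_{i=1}^\infty 2(b_i - a_i) \le 2\lambda^*(R) + 2\varepsilon$, and letting $\varepsilon\to 0$ finishes the argument.

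There is essentially no obstacle here beyond bookkeeping: the only points needing a line of care are verifying that the open $r$-neighborhood and the interior of $\overline{B(0, a_i)}$ are the concentric open balls asserted above — so that the set difference is exactly the shell $\ha{x}{a_i\le|x|<b_i}$, neither gaining nor losing the bounding spheres — and dispatching the degenerate cases $a_i = 0$ and $a_i = b_i$. Conceptually the proposition is just the remark that an annular region is sandwiched between two concentric balls, and hence its $\gamma$-measure is controlled by the radial thickness via \cref{convex_neighborhood_cover}; all of the real work is already contained in that lemma.
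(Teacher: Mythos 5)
Your proof is correct and follows essentially the same route as the paper: cover the pinned distance set $\{|x|:x\in A\}$ by intervals of small total length, observe that the corresponding annular shells are exactly of the form $C_r\setminus\Int C$ for a ball $C$ centered at the origin, invoke \cref{convex_neighborhood_cover}, and sum via $\sigma$-subadditivity. The only cosmetic difference is that the paper uses open intervals $I_i\subseteq(0,\infty)$ (so that $A_i = B(0,\sup I_i)\setminus\overline B(0,\inf I_i)$) whereas you use half-open intervals $[a_i,b_i)\subseteq[0,\infty)$; your variant handles the $0\in\{|x|:x\in A\}$ case a bit more cleanly, but the substance is identical.
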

\begin{proof}
    Let $\varepsilon > 0$, and let $D=\ha[\big]{|x|}{x \in A}$. Choose a sequence of open intervals $I_1, I_2, \dots \subseteq (0, \infty)$ such that $D\subseteq \bigcup_{i=1}^\infty I_i$ and for $r_i = \lambda(I_1)$, $\sum_{i=1}^\infty r_i \le \lambda^*(D) + \varepsilon$. Consider the annulus 
    \[A_i = \ha[\big]{x}{|x|\in I_i}= B(0, \sup I_i) \setminus \overline B(0, \inf I_i).\]
    Since $B(0, \sup I_i)$ is the open $r_i$-neighborhood of $\overline B(0, \inf I_i)$, \cref{convex_neighborhood_cover} implies that $\gamma(A_i) \le 2r_i$. It follows from the subadditivity of $\gamma$ that
    \[\gamma(A) \le \sum_{i=1}^\infty \gamma(A_i) \le 2 \sum_{i=1}^\infty r_i \le 2 (\lambda^*(D) + \varepsilon).\]
    Since $\varepsilon$ was arbitrary, this shows that $\gamma(A)\le 2\lambda^*(D)$.
\end{proof}

\begin{Cor} \label{nontn_gstrip_null}
    For $d \ge 2$, there is a set in $\R^d$ that is $\gamma$-null but not tube-null.
\end{Cor}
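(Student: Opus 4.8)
The plan is to apply \cref{pinned_dist_set_bound}, which reduces the task to producing a set $A\subseteq\R^d$ that is not tube-null but whose pinned distance set $\{\,|x|:x\in A\,\}$ is Lebesgue null. I would take $A$ to be a union of concentric spheres: $A=\{\,x\in\R^d:|x|\in E\,\}$ for a compact set $E\subseteq[1,2]$. Then $\{\,|x|:x\in A\,\}=E$, so \cref{pinned_dist_set_bound} gives $\gamma(A)\le 2\lambda^*(E)$, which is $0$ whenever $\lambda(E)=0$; the entire problem is thus to choose a measure-zero $E$ for which $A$ fails to be tube-null.

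For this I would let $E$ be a self-similar Cantor subset of $[1,2]$ of Hausdorff dimension $s$ with $\tfrac12<s<1$ (for concreteness $s=\tfrac34$); then $\lambda(E)=0$ while $0<\mathcal H^s(E)<\infty$, so by Frostman's lemma there is a Borel probability measure $\nu$ on $E$ with $\nu(I)\le C|I|^s$ for every interval $I$. Lift $\nu$ to a measure $\mu$ on $A$ by $\mu(B)=\int_E \mathcal H^{d-1}\bigl(B\cap S(0,r)\bigr)\,d\nu(r)$, where $S(0,r)=\{\,x:|x|=r\,\}$; since all radii lie in $[1,2]$ we have $0<\mu(A)<\infty$. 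The key estimate will be
\[
    \mu(T)\le C_{s,d}\,\rho^{d-1}
    \qquad\text{for every tube $T$ of radius $\rho\le 1$,}
\]
a tube being the $\rho$-neighborhood of a line. Granting it, if $A\subseteq\bigcup_i T_i$ with $T_i$ a tube of radius $\rho_i$, then (discarding the harmless tubes with $\rho_i>1$, for which $\mu(T_i)\le\mu(A)\le C_{s,d}\rho_i^{d-1}$ after enlarging the constant) we get $\mu(A)\le\sum_i\mu(T_i)\le C_{s,d}\sum_i\rho_i^{d-1}$, so the total cross-sectional area $\omega_{d-1}\sum_i\rho_i^{d-1}$ is bounded below by a positive constant. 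Hence $A$ is not tube-null, which would complete the proof (and, since for $d=2$ tube-null coincides with strip-null, this also yields the strict separation of $\gamma$-null from tube-null stated before the corollary).

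The estimate itself is where the real work lies. Fix a tube $T$ of radius $\rho\le 1$ around a line $\ell$ and put $R=\operatorname{dist}(\ell,0)$. For each $r\in E$ I would bound $\mathcal H^{d-1}\bigl(T\cap S(0,r)\bigr)$: it vanishes when $r<R-\rho$; otherwise, after rotating so that $\ell$ is vertical through $(R,0,\dots,0)$, a direct computation using the curvature $1/r$ of the sphere shows that $T\cap S(0,r)$ lies in at most two patches of $(d-1)$-measure $O(\rho^{d-3/2})$ when $|r-R|\le\rho$, and $O\bigl(\rho^{d-1}|r-R|^{-1/2}\bigr)$ when $r-R>\rho$ (bounded factors of $r$ being absorbed into the constants). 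Integrating in $r$ against $\nu$ and using the Frostman bound $\nu(\{\,|r-R|\le t\,\})\le C(2t)^s$, the near-tangent range contributes $O(\rho^{d-3/2}\rho^{s})=O(\rho^{d-1})$ because $s>\tfrac12$, and a dyadic splitting of the range $r-R>\rho$ bounds the rest by a constant times $\rho^{d-1}$ since $\sum_{k\le 0}2^{k(s-1/2)}<\infty$.

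I expect the grazing case to be the main obstacle. A tube tangent to a unit sphere meets it in a set of diameter $\sim\sqrt{\rho}$, far larger than $\rho$, so a tube can sweep up an individual sphere extremely efficiently; it is only because the radii in $E$ form a set of dimension greater than $\tfrac12$ — precisely the place where the hypothesis $s>\tfrac12$ is used — that these tangential contributions cannot be stitched together into a covering of $A$ of small total cross-section. Verifying the two-sided geometric bound on $\mathcal H^{d-1}(T\cap S(0,r))$ uniformly in the position of $\ell$ (including when $R<1$ or $\ell$ passes through the origin) is the technical heart of the argument; everything else is bookkeeping with the Frostman measure.
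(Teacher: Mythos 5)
Your proposal is correct and takes essentially the same approach as the paper: you build the same example (concentric spheres $\{x : |x| \in E\}$ with $E$ a Lebesgue-null compact set of Hausdorff dimension $> 1/2$) and apply \cref{pinned_dist_set_bound} in the same way to get $\gamma$-nullity. The only difference is that the paper simply cites Proposition~6 of Carbery--Soria--Vargas for the non--tube-null part, whereas you re-derive that result from scratch via a Frostman measure and the grazing/transversal estimates on $\mathcal H^{d-1}(T \cap S(0,r))$; your geometry and dyadic bookkeeping there are sound and reproduce the CSV argument.
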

\begin{proof}
    We will use an example of a non--tube-null set given in \cite{CSV}.  Let $F\subseteq [1/2, 1]$ be a Lebesgue null set with Hausdorff dimension larger than 1/2, and let
    \[E=\ha[\big]{x\in \R^d}{|x|\in F}.\]
    It was shown in \cite[Proposition 6]{CSV} that $E$ is not tube-null. On the other hand, it follows from \cref{pinned_dist_set_bound} that $\gamma(E)\le 2\lambda(F)=0$.
\end{proof}

\section{Open questions}

Our main question is the following:

\begin{Question} \label{q_null}
    Let $A\subseteq \R^d$ be a Lebesgue null set for some $d \ge 2$. Is it true that $\gamma(A) = 0$?
\end{Question}

We have seen in \cref{gstrip_null_is_null} that every $\gamma$-null set is Lebesgue null. For $d = 1$, the converse is also true, and for $d \ge 2$, we do not know any counterexample. If the answer to \cref{q_null} is positive, then as a consequence of \cref{gstrip_main}, every bounded Jordan measurable set is both distance and measure Kolmogorov.

It is clear from \cref{strip_null_is_gstrip_null} that when $d=2$, a counterexample to \cref{q_null} cannot be tube-null. We have seen in \cref{nontn_gstrip_null} that the construction in \cite{CSV} is not a counterexample. Another non-trivial example of a non--tube-null set was given by Shmerkin and Suomala \cite{ShmerkinSuomala} using fractal percolation. This construction even has the stronger property that it cannot be covered by neighborhoods of convex curves of arbitrarily small total width, therefore, we cannot use \cref{convex_neighborhood_cover} to show that it is $\gamma$-null. Thus, our next question is as follows:

\begin{Question} \label{q_jordan_measurable}
    Is the fractal percolation construction of a non--tube-null set by Shmerkin and Suomala $\gamma$-null?
\end{Question}

Our last question is motivated by \cref{surface_gstrip_null}. Note that $\R^{d-1}\subseteq \R^d$ is $\gamma$-null. Thus, \cref{surface_gstrip_null} says that the image of a specific $\gamma$-null set under a sufficiently smooth map $\R^d \to \R^d$ is $\gamma$-null. The question is whether this holds more generally:

\begin{Question} \label{q_image_gstrip_null}
    Let $d \ge 2$, and let $f : \R^d \to \R^d$ be a $\mathcal C^\infty$ diffeomorphism. Suppose that $\gamma(A) = 0$ for some $A\subseteq \R^d$. Is it true that $\gamma(f(A)) = 0$?
\end{Question}

\section*{Acknowledgements}
The author would like to thank his supervisors M. Elekes and T. Keleti for their guidance and many helpful discussions. The author is also grateful to V. Kaluža for his comments on the proof of \cref{dist_to_measure}.

\printbibliography

\end{document}